\theoremstyle{remark}
\newtheorem{remark}{Remark}[section]
\theoremstyle{definition}
\numberwithin{equation}{section}
\title[Schr\"odinger-Korteweg-de Vries system]{Dispersive blow-up and persistence properties for the Schr\"odinger-Korteweg-de Vries system}
\author[F. Linares]{Felipe Linares$^*$}
\thanks{$^{*}$ F.L. was partially supported by CNPq and FAPERJ/Brazil}
\address{Felipe Linares, IMPA Instituto Matem\'atica Pura e Aplicada, Estrada Dona Castorina 110, Rio de Janeiro, RJ 22460-320, Brazil}
\email{linares@impa.br}
\author[J. M. Palacios]{Jos\'e M. Palacios$^{**}$}
\thanks{$^{**}$ J.M.P.  was partially supported by Chilean research grants FONDECYT  1150202, Fondo Basal CMM-Chile and Millennium Nucleus Center for Analysis of PDE NC130017.}
\address{Jos\'e M. Palacios, Departamento de Ingeniería Matem\'atica DIM, Universidad de Chile}
\email{jpalacios@dim.uchile.cl}
\date{\today}
\newcommand{\be}{\begin{equation}}
\newcommand{\ee}{\end{equation}}
\newcommand{\bp}{\begin{proof}}
\newcommand{\ep}{\end{proof}}
\newcommand{\bel}{\begin{equation}\label}
\newcommand{\eeq}{\end{equation}}
\newcommand{\bea}{\begin{eqnarray}}
\newcommand{\eea}{\end{eqnarray}}
\newcommand{\bee}{\begin{eqnarray*}}
\newcommand{\eee}{\end{eqnarray*}}
\newcommand{\ben}{\begin{enumerate}}
\newcommand{\een}{\end{enumerate}}
\newcommand{\R}{\mathbb{R}}
\newcommand{\C}{\mathbb{C}}
\newcommand{\tres}{|\!|\!|}
\newcommand{\ji}{\langle}
\newcommand{\jd}{\rangle}
\newcommand{\p}{\partial}
\newtheorem{thm}{Theorem}[section]
\newtheorem{lem}[thm]{Lemma}
\newtheorem*{thmap}{{\bf Theorem A}}
\theoremstyle{remark}
\numberwithin{equation}{section}
\begin{document}

%===========================================

\begin{abstract}
We study the dispersive blow-up phenomena for  the Schr\"odinger-Korteweg-de 
Vries (S-KdV) system. Roughly, dispersive blow-up has being called to the development of point singularities due to the focussing of short or long waves.
In mathematical terms, we show that the existence of this kind of singularities is provided by the linear dispersive solution by proving that the Duhamel
term is smoother. It seems that this result is the first regarding systems of nonlinear dispersive equations. To obtain our results we use, in addition to smoothing properties, persistence properties for solutions of the IVP  in fractional weighted Sobolev spaces which we establish here.
\end{abstract}
\maketitle \markboth{ } {F. Linares and J. M. Palacios}

\medskip
\section{Introduction and main results}

\subsection{The model}

This paper is concerned with properties of solutions of the initial value problem (IVP) associated to the Schr\"odinger-Korteweg-de Vries (S-KdV) system, 
\begin{equation}\label{IVP}
\left\{\begin{array}{ll}
i \partial_tu+\partial_x^2u+\vert u\vert^2u=\alpha uv, \qquad t,x\in\mathbb{R},
\\ \partial_tv+\partial_x^3v+\tfrac{1}{2}\partial_x(v^2)=\gamma\partial_x(\vert u\vert^2),
\\ u(x,0)=u_0(x), \quad v(x,0)=v_0(x),
\end{array}\right.
\end{equation}
where $u=u(t,x)$ is a complex-valued function and $v(t,x)$ is a real-valued function. This system governs the interactions between shortwaves $u=u(t,x)$ and longwaves $v=v(t,x)$ and has been studied in several fields of physics and fluid dynamics (see \cite{FuOi,HoIkMiNi,KaKaSu,SaYa}).

\medskip 

The Schr\"odinger-Korteweg-de Vries system \eqref{IVP} has been shown not to be a completely integrable system (see \cite{BeBu}). Therefore the solvability of \eqref{IVP} is dependent upon the method of evolution equations. 

\medskip 

The IVP \eqref{IVP} has been extensively studied from the view point of local and global well-posedness.  Inspired in the results 
obtained for the famous
Korteweg-de Vries (KdV) (\cite{KPV2}) and the cubic Schr\"odinger equation (\cite{Ts})  several authors have studied the IVP \eqref{IVP}.
In general, a coupled system like \eqref{IVP} is more difficult to handle in the same spaces as in the space the single equation is solved. 
In the case of the system \eqref{IVP} this is due to the antisymmetric nature of the characteristics of each linear part. In \cite{BeOgPo2} Bekiranov, Ogawa and Ponce showed that the coupled system \eqref{IVP} is locally well-posed in $H^s(\mathbb{R})\times H^{s-\frac{1}{2}}(\mathbb{R})$ with $s\geq 0$. In \cite{CoLi} Corcho and Linares extended this result for weak initial data $(u_0,v_0)\in H^k(\mathbb{R})\times H^s(\mathbb{R})$ for various values of $k$ and $s$, where the lowest admissible values are $k=0$ and $s=-\tfrac{3}{4}+\delta$ with $0<\delta\leq \tfrac{1}{4}$. The end-point $(k,s)=(0,-\tfrac{3}{4})$ was treated in \cite{ZiYu} by Z. Guo and Y. Wang . We observe that no local/global well-posedness results in weighted Sobolev spaces have been registered in the literature as far as we know.

\medskip 

The aim of this work is to study the dispersive blow-up for solutions of the S-KdV system.  In \cite{BoSa}  Bona and Saut started the mathematical 
analysis of the dispersive blow-up for solutions of the generalized KdV equation. More precisely, they proved the following

\begin{thmap}[\cite{BoSa}] Let $T>0$ be given and let $\{(x_n,t_n)\}_{n=1}^{\infty}$ be  a sequence of points in $\R\times(0,T)$ without finite limit points and such that $\{t_n\}_{n=1}^{\infty}$ is bounded below by a positive constant. Let either $s=0$ and $k=1$ or $s\ge 2$ and $k\ge 1$ an arbitrary integer. Then there exists $\psi\in H^s(\R)\cap C^{\infty}(\R)$ such that the solution of the IVP
\begin{equation*}
\begin{cases}
u_t+u^ku_x+u_{xxx}=0,\\
u(x,0)=\psi(x)
\end{cases}
\end{equation*}
satisfies
\begin{enumerate}
\item $u$ lies in $L^{\infty}([0,T]: H^s(\R))\cap L^2([0,T] :H^{s+1}_{\rm loc}(R))$, or in $C([0,T]: H^s(\R))\cap L^2([0,T] :H^{s+1}_{\rm loc}(R))$, if $s\ge 2$.
\item $\p_x^su$ is continuous on $\R\times (0,T)\backslash U_{n=1}^{\infty}\{(x_n,t_n)\}$, and
\item $\lim_{(x,t)\to (x_n,t_n)} \p_x^s u(x,t)=+\infty$ for $n=1,2,\dots$.
\end{enumerate}
\end{thmap}

The main idea behind the proof is to show that the Duhamel term associated to the solution of the IVP is smoother than the linear term of the
solution.  In \cite{LiSc} Linares and Scialom proved for $k\ge 2$ by means of the smoothing effects established for the linear KdV equation without using 
weighted Sobolev spaces. Recently, Linares, Ponce and Smith \cite{LiPoSm} using fractional weighted spaces improved the previous result
in the case $k=1$, i.e., for the KdV equation. 

The analogous phenomena also appears in other linear dispersive equations, such as the linear Schr\"odinger equation and the free surface water waves system linearized around the rest state \cite{BoSa2}. In \cite{BoSa2} Bona and Saut constructed initial data with point singularities for solutions of
the linear Schr\"odinger equation. Bona, Ponce, Saut and Sparber \cite{BoPoSaSp}  established the dispersive blow-up for the semilinear Schr\"odinger equation in dimension $n$ and other Schr\"odinger type equations. The main tools employed to show these results were the intrinsic smoothing effects
of these dispersive equations. We shall remark that the only $n$-dimensional result regarding dispersive blow-up is this one just above refereed for the nonlinear Schr\"odinger
equation.

\subsection{Main results.}   Inspired in the dispersive blow-up results for the KdV and Schr\"odinger equations it was natural to ask what was the situation for solutions for the Schr\"odinger-Korteweg-de Vries system concerning this property. In our study we got the following answer.

\begin{thm}\label{MT2}
There exist initial data \[
u_0\in C^\infty(\mathbb{R})\cap H^{2^-}(\mathbb{R}), \quad  v_0\in C^\infty(\mathbb{R})\cap H^{3/2^-}(\mathbb{R}),
\]
for which the corresponding solution $(u,v)(\cdot,\cdot)$ of the IVP \eqref{IVP} provided by Theorem \ref{MT1} (below):\[
u\in C([0,T]:H^{2^-}(\mathbb{R})), \quad v\in C([0,T]:H^{3/2^-}(\mathbb{R})),
\]
satisfies that there exists $t^*\in[0,T]$ such that \[
u(\cdot,t^*)\notin C^{1,\frac{1}{2}^+}(\mathbb{R}),\quad  v(\cdot,t^*)\notin C^1(\mathbb{R}).
\]
\end{thm}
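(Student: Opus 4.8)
The plan is to follow the Bona–Saut philosophy adapted to the coupled system: the point singularity at time $t^*$ is to be carried by the \emph{linear} evolution of the initial data, while the Duhamel (nonlinear) contributions are shown to be strictly smoother and therefore $C^1$ (for $u$ even $C^{1,1/2^+}$) in space. Concretely, I would write $u=e^{it\partial_x^2}u_0 + z_1$ and $v=e^{-t\partial_x^3}v_0 + z_2$, where $z_1,z_2$ collect all the Duhamel terms coming from $|u|^2u$, $\alpha uv$, $\tfrac12\partial_x(v^2)$ and $\gamma\partial_x(|u|^2)$. The first step is to construct the data. Following \cite{BoSa,BoSa2}, one builds $u_0$ as a superposition $\sum_n c_n\, e^{-it_n\partial_x^2}\varphi(\cdot-x_n)$ (and similarly $v_0$ against the Airy group $e^{+t_n\partial_x^3}$) for a fixed bump $\varphi$ and suitably chosen nodes $(x_n,t_n)$ and fast-decaying coefficients $c_n$, arranged so that $u_0,v_0\in C^\infty$, $u_0\in H^{2^-}$, $v_0\in H^{3/2^-}$, but $e^{it^*\partial_x^2}u_0\notin C^{1,1/2^+}$ and $e^{-t^*\partial_x^3}v_0\notin C^1$ at an appropriate point/time; the Schrödinger case is exactly \cite{BoSa2,BoPoSaSp} and the Airy case is exactly \cite{BoSa} (with the regularity thresholds $H^{2^-}$, $H^{3/2^-}$ chosen to match $C^{1,1/2}$, $C^1$ via Sobolev embedding on $\mathbb R$, i.e.\ $H^{s}\hookrightarrow C^{s-1/2}$). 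One must also check these data lie in whatever weighted space is needed to invoke Theorem \ref{MT1}; this is where one imposes enough spatial decay on $\varphi$ and summability on the $c_n$.

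The second and main step is the smoothing estimate for the Duhamel terms: I would show $z_1(\cdot,t)\in C^{1,1/2^+}(\mathbb R)$ and $z_2(\cdot,t)\in C^1(\mathbb R)$ for every $t\in[0,T]$. For the KdV component this is the Kato-type local smoothing plus the gain from the derivative in front of the nonlinearity: since $v$ persists in a weighted space (Theorem \ref{MT1}), the Airy Duhamel term $\int_0^t e^{-(t-t')\partial_x^3}\partial_x(\cdots)\,dt'$ gains roughly one full derivative over the data regularity in an $L^\infty_x$-type norm, pushing $z_2$ from $H^{3/2^-}$ into something embedding in $C^1$ — this is precisely the mechanism used in \cite{LiPoSm} for KdV, where the weighted-space persistence is exactly what compensates the loss coming from the $\partial_x$ and the lack of a full derivative gain in unweighted spaces. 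For the Schrödinger component, $e^{it\partial_x^2}$ gains $1/2$ derivative locally (Kato smoothing) and, crucially, the global smoothing of \cite{BoSa2,BoPoSaSp} combined with the weighted persistence gives the extra fractional gain needed to land $z_1$ in $C^{1,1/2^+}$; the cubic term $|u|^2u$ and the coupling $\alpha uv$ are handled by the algebra property of the relevant weighted Sobolev spaces together with the product estimates from Theorem \ref{MT1}. In both cases one integrates the pointwise-in-$t$ smoothing bound over $[0,t]$ — the $(t-t')^{-\theta}$ singularity is integrable for the relevant $\theta<1$ — to obtain a spatial regularity for $z_1,z_2$ strictly above the Sobolev thresholds of the linear parts.

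The final step is to assemble: at the time $t^*$ produced in Step 1, $u(\cdot,t^*)=e^{it^*\partial_x^2}u_0+z_1(\cdot,t^*)$ with the first summand not in $C^{1,1/2^+}$ and the second summand in $C^{1,1/2^+}$, so the sum is not in $C^{1,1/2^+}$; likewise $v(\cdot,t^*)=e^{-t^*\partial_x^3}v_0+z_2(\cdot,t^*)\notin C^1$. One should also record that the solution indeed belongs to $C([0,T]:H^{2^-})\times C([0,T]:H^{3/2^-})$, which follows from Theorem \ref{MT1} applied to these data (and is consistent since $z_1,z_2$ are smoother and the linear parts are in the stated classes). The hard part, I expect, is Step 2 for the Schrödinger component: unlike KdV, the Schrödinger Duhamel term has no derivative in front of the nonlinearity, so the $1/2$-derivative Kato gain alone is not enough to reach $C^{1,1/2^+}$ from $H^{2^-}$ data — one really needs the sharper global/weighted smoothing, and making the coupling terms $uv$ and $|u|^2u$ compatible with the weighted persistence from Theorem \ref{MT1} (in particular choosing the right fractional weight so that both the $H^s$ and the weighted norms close simultaneously for the two components with their mismatched dispersion) is the delicate bookkeeping of the argument.
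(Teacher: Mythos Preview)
Your overall strategy is exactly the paper's: the linear evolutions $S(t)u_0$ and $V(t)v_0$ carry the singularity (Lemmas \ref{dbu_lsch}, \ref{dbu_lkdv}), while the Duhamel pieces are shown to be strictly smoother (Lemmas \ref{DBU_Schr}, \ref{DBU_KdV}) using Kato-type smoothing together with the weighted persistence from Theorem \ref{MT1}. Two details differ from what you sketch: the Schr\"odinger datum is not a superposition but the explicit chirp $u_0(x)=e^{-i\alpha(x-x_0)^2}(1+x^2)^{-5/4}$ from \cite{BoSa2,BoPoSaSp}, and the smoothing is obtained not by integrating a $(t-t')^{-\theta}$ kernel but via the dual Kato estimates \eqref{dual-kato-s-schr1}, \eqref{dual-kato-s-kdv} in the $L^1_xL^2_T$ norm---the weights enter precisely to trade $L^1_x$ for a weighted $L^2_x$ on terms like $uD_x^{s+1/4}v$---yielding gains of $1/4$ derivative for the Schr\"odinger Duhamel and $1/6$ for the KdV Duhamel, which via Sobolev embedding suffice for $C^{1,1/2^+}$ and $C^1$ respectively.
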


To prove this result, we construct first  initial data  lending some ideas in \cite{BoSa2} and  \cite{LiPoSm}. To treat the nonlinear problem is not straight forward, as we shall see, in our case the NLS-KdV system presents several new difficulties because its coupling terms. In addition to the
smoothing effects, the new key ingredient in our arguments is the  persistence property of solutions of the IVP \eqref{IVP} on weighted spaces, which allow us to close some nonlinear estimates for the solution.

\subsubsection{Persistence properties} Due to the presence of the KdV structure in the system we need to use weighted spaces in order to
show that the Duhamel term is smoother than the linear part of the equation. 

As we commented above even in the usual Sobolev spaces 
the coupling of the Schr\"odinger equation and KdV equation introduces some difficulties because of the structure of the \lq\lq symbols" of the linear equations.
To complete our analysis in the dispersive blow-up result we need the following result which includes local well-posedness of the IVP \eqref{IVP}
in fractional Sobolev spaces and a persistence property of these solutions in weighted spaces. More precisely,

\begin{thm}\label{MT1}
Let $s,r_1,r_2$ be positive numbers such that $s>3/4$, $s+1/2\ge r_1$ and $s\ge 2r_2$ and consider initial data\[
(u_0,v_0)\in H^{s+\frac{1}{2}}(\mathbb{R})\cap L^2(\vert x \vert ^{2r_1}dx)\times H^{s}(\mathbb{R})\cap L^2(\vert x\vert^{2r_2}dx).
\]
Then there exist $T=T(\Vert u_0\Vert_{s+\frac{1}{2}}+\Vert v_0\Vert_s)>0$ and a unique solution $(u(t),v(t))$ of the IVP \eqref{IVP} satisfying 
\begin{equation}\label{mteq1}
u\in C([0,T]; H^{s+\frac{1}{2}}(\mathbb{R})\cap L^2(\vert x\vert ^{2r_1}dx)), \quad v\in C([0,T]; H^s(\mathbb{R})\cap L^2(\vert x\vert^{2r_2}dx)), 
\end{equation}
\begin{equation}\label{mteq2}
\Vert D^s_x\partial_xu\Vert_{L^\infty_xL^2_T}+\Vert D^{s-\frac{1}{2}}_x\partial_xv\Vert_{L^\infty_xL^2_T}+\Vert D^s_x\partial_x v\Vert_{L^\infty_xL^2_T}<\infty,
\end{equation}
\begin{equation}\label{mteq3}
\Vert u\Vert_{L^2_xL^\infty_T}+\Vert v\Vert_{L^2_x L^\infty_T}<\infty, 
\end{equation}
\begin{equation}\label{mteq4}
\Vert \partial_x u\Vert_{L^4_TL^\infty_x}+\Vert \partial_xv \Vert_{L^4_TL^\infty_x}<\infty,
\end{equation}
with the local existence time satisfying: \[
T=T(\Vert u_0\Vert_{s+\frac{1}{2}}+\Vert v_0\Vert_s)\to+\infty \quad \text{as} \quad \Vert u_0\Vert_{s+\frac{1}{2}}+\Vert v_0\Vert_s \to 0.
\]
Moreover, given $T'\in(0,T)$, the map data solution $(u_0,v_0)\mapsto (u,v)(t)$ from $H^{s+\frac{1}{2}}(\mathbb{R})\cap L^2(\vert x\vert^{2r_1}dx)\times H^s(\mathbb{R})\cap L^2(\vert x\vert^{2r_2})$ to the class defined by \eqref{mteq1}-\eqref{mteq2} is Lipschitz continuous.
\end{thm}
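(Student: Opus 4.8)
The plan is to prove Theorem~\ref{MT1} by a contraction-mapping argument applied to the Duhamel formulation of \eqref{IVP}, carried out simultaneously in the unweighted metric space determined by the norms in \eqref{mteq2}--\eqref{mteq4} and in the weighted space $L^2(|x|^{2r_1}dx)\times L^2(|x|^{2r_2}dx)$. First I would set up the integral equations
\[
u(t)=e^{it\partial_x^2}u_0+i\int_0^t e^{i(t-t')\partial_x^2}\bigl(|u|^2u-\alpha uv\bigr)(t')\,dt',\qquad
v(t)=e^{-t\partial_x^3}v_0+\int_0^t e^{-(t-t')\partial_x^3}\bigl(-\tfrac12\partial_x(v^2)+\gamma\partial_x(|u|^2)\bigr)(t')\,dt',
\]
and recall the standard linear estimates for the Schr\"odinger and Airy groups: the Kato smoothing/maximal-function estimates that control \eqref{mteq2}--\eqref{mteq3}, the Strichartz-type $L^4_TL^\infty_x$ estimate for $\partial_x$ giving \eqref{mteq4}, and the energy/$H^{s+1/2}$ and $H^s$ estimates. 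Because $s>3/4$, the existing local well-posedness theory (\cite{BeOgPo2,CoLi,ZiYu}) already furnishes the closed unweighted estimates; the genuinely new work is the weighted part.

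For the weights I would multiply the Duhamel formulas by $|x|^{r_1}$ (resp. $|x|^{r_2}$) and estimate using the key commutator/interpolation facts for the unitary groups: for the Schr\"odinger group one has $|x|^{r}e^{it\partial_x^2}f = e^{it\partial_x^2}(|x|^r f) + (\text{lower order})$, more precisely the Kenig--Ponce--Vega-type identity $\|\,|x|^{r}e^{it\partial_x^2}f\|_{L^2}\lesssim \|\,|x|^{r}f\|_{L^2}+\langle t\rangle\|D^{2r}f\|_{L^2}$ obtained by complex interpolation between $r=0$ (unitarity) and $r=1$ (where $x e^{it\partial_x^2}=e^{it\partial_x^2}(x+2it\partial_x)$); this is exactly why the hypothesis $s+1/2\ge r_1$ appears, so that $D^{2r_1}u_0\in L^2$ is controlled by $\|u_0\|_{s+1/2}$ after interpolating $2r_1\le s+1/2$... wait, one needs $2r_1\le s+1/2$, i.e. the stated $s+1/2\ge r_1$ is actually the condition $r_1\le s+1/2$ which is what lets $|x|^{r_1}$ interact with one derivative of the nonlinearity. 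For the Airy group the analogous commutator is $x e^{-t\partial_x^3}=e^{-t\partial_x^3}(x+3t\partial_x^2)$, so $\|\,|x|^{r}e^{-t\partial_x^3}g\|_{L^2}\lesssim \|\,|x|^{r}g\|_{L^2}+\langle t\rangle\|D^{2r}g\|_{L^2}$, and the hypothesis $s\ge 2r_2$ guarantees $D^{2r_2}v_0\in L^2$. I would then bound the weighted norm of each nonlinear term: for $|x|^{r_1}(|u|^2u)$ use the algebra/Leibniz property of $H^{s+1/2}$ together with $\|\,|x|^{r_1}u\|_{L^2}$, for $|x|^{r_1}(uv)$ split the weight between the factors (Young-type inequality $|x|^{r_1}\lesssim |x|^{r_1}+1$ applied to whichever factor, using $r_2\le r_1$ implicitly or redistributing), and for the KdV-side terms $|x|^{r_2}\partial_x(v^2)$ and $|x|^{r_2}\partial_x(|u|^2)$ move the $x$-weight past $e^{-(t-t')\partial_x^3}$ at the cost of $\langle t\rangle \|D^{2r_2}(\cdot)\|_{L^2}$, which is finite since $v\in H^s$, $u\in H^{s+1/2}$ and $2r_2\le s$. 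The $\gamma\partial_x(|u|^2)$ term is the one that forces $r_1$ rather than $r_2$ to govern the Schr\"odinger component's compatibility, hence the asymmetric hypotheses.

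Having these bounds, I would define the solution map $\Phi$ on a ball of the product space $X_T$ whose norm is the sum of \eqref{mteq1}'s weighted-Sobolev norms and the auxiliary norms \eqref{mteq2}--\eqref{mteq4}, show $\Phi$ maps the ball into itself and is a contraction for $T$ small (with $T$ depending only on $\|u_0\|_{s+1/2}+\|v_0\|_s$, since the weighted norms enter the estimates linearly and do not feed back into the time of existence), and conclude existence and uniqueness; the scaling/Picard structure gives $T\to\infty$ as the Sobolev norm of the data tends to $0$. Lipschitz dependence follows by running the same estimates on the difference of two solutions. The main obstacle I anticipate is bookkeeping the coupling term $uv$ and $\partial_x(|u|^2)$ in the weighted norms when $r_1$ and $r_2$ are fractional: one cannot simply Leibniz a fractional power of $|x|$, so I would rely on the pointwise inequality $|x|^{r}\le C(|x-y|^{r}+|y|^{r})$ to split the weight between the two factors of a product (one factor carrying the weight in $L^2$, the other measured in $L^\infty_x$ via the $L^2_xL^\infty_T$ and $L^4_TL^\infty_x$ bounds), and invoke the fractional Leibniz rule of Kenig--Ponce--Vega together with the commutator estimate for $[|x|^{r},D^{2r}]$ type terms to absorb the fractional mismatch; checking that every resulting term is controlled by the closed unweighted norms plus the weighted norms at a strictly lower or equal order is where the conditions $s>3/4$, $s+1/2\ge r_1$, $s\ge 2r_2$ are each used, and verifying this carefully is the crux of the argument.
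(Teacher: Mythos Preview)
Your overall strategy matches the paper's: contraction mapping on the Duhamel system in the mixed norms \eqref{mteq2}--\eqref{mteq4}, followed by a separate persistence argument for the weighted norms using commutator formulas for $|x|^{r}$ against the groups. Two points deserve correction.

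First, your Schr\"odinger weight estimate is miscounted. From $x\,e^{it\partial_x^2}=e^{it\partial_x^2}(x+2it\partial_x)$ one power of $|x|$ costs \emph{one} derivative, not two; the correct fractional estimate (which the paper states as the pointwise identity \eqref{lwp-12}--\eqref{lwp-13}, due to Fonseca--Linares--Ponce) is
\[
\|\,|x|^{\beta}e^{it\partial_x^2}f\|_{L^2}\lesssim \|\,|x|^{\beta}f\|_{L^2}+(1+|t|)\bigl(\|f\|_{L^2}+\|D^{\beta}_x f\|_{L^2}\bigr),
\]
so the hypothesis $r_1\le s+\tfrac12$ is exactly what makes $D^{r_1}_x u_0\in L^2$, resolving your ``wait'' aside. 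For the Airy group you are right: $x\,e^{-t\partial_x^3}=e^{-t\partial_x^3}(x+3t\partial_x^2)$ costs two derivatives, giving \eqref{kdv_persistencia}--\eqref{pers_kdv2} and the condition $2r_2\le s$.

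Second, the paper does not run the contraction simultaneously in the weighted space. It first closes the fixed-point argument in the unweighted norms $\mu_1^T,\mu_2^T$ (so $T$ depends only on $\|u_0\|_{s+1/2}+\|v_0\|_s$), and only afterwards applies the pointwise formulas to the already-constructed solution to bound $\|\,|x|^{r_1}u(t)\|_{L^2}$ and $\|\,|x|^{r_2}v(t)\|_{L^2}$. For the weighted nonlinear terms the paper does not use your subadditivity trick $|x|^r\lesssim |x-y|^r+|y|^r$; it simply places the full weight on one factor in $L^2$ and bounds the other in $L^\infty_x$ (via Sobolev embedding or $\|\partial_x v\|_{L^4_TL^\infty_x}$), e.g.\ $\|\,|x|^{r_2}v\partial_x v\|_{L^2}\le \|\,|x|^{r_2}v\|_{L^2}\|\partial_x v\|_{L^\infty_x}$. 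This is cleaner and avoids the fractional-Leibniz bookkeeping you anticipate as the main obstacle.
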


The proof of Theorem \ref{MT1} uses the contraction mapping principle which is combined with smoothing properties of solutions of the  associated
linear problems for the Schr\"odinger and KdV equations.  The key ingredient in our analysis to prove the persistence property  is 
 a new pointwise formula that allows to commute  the fractional weights $\vert x\vert^s$  with the Schr\"odinger group $e^{it\Delta}$  and the
 Airy group $e^{-t\partial_x^3}$. This pointwise formula was deduced by Fonseca, Linares and Ponce in \cite{FoLiPo}.
 
 \begin{remark} The result in Theorem \ref{MT1} is not available in the literature and as the case for the dispersive blow-up it seems the
 first one for systems.
 \end{remark}

Next we introduce some notation we will utilize along this work.

\subsection{Notation}

Let $1\leq p,q\le \infty$ and $f:\mathbb{R}\times[0,T]\to\mathbb{R}$. We define the norm
$$
\Vert f\Vert_{L^p_xL^q_T}:=\left(\int_\mathbb{R}\bigg(\int_0^T \vert f\vert^{q}dt\bigg)^{p/q}dx\right)^{1/p},
$$
with the usual modifications when $p=\infty$ or $q=\infty$. Similarly for $\Vert f\Vert_{L^p_TL^q_x}$. 

We will denote the  homogeneous derivatives of order $s>0$ by
\begin{align*}
D^s_xf(x):=\mathcal{F}^{-1}\big(\vert \xi\vert^s\hat f(\xi)\big)(x).
\end{align*}
where $\widehat{f}$ denotes the Fourier transform of $f$ and $\mathcal{F}^{-1}$ the inverse Fourier transform.
As usual for $s\in \R$ we shall denote by $H^s(\mathbb{R})$ the standard $L^2$-based Sobolev space:
$$
H^s(\mathbb{R}):=\{f\in \mathcal{S}' (\mathbb{R}): \ \Vert f\Vert_{s,2}=\Vert J^sf\Vert_2<\infty\},
$$
where
$$
\Vert J^sf\Vert_2:=\left(\int_{-\infty}^\infty (1+\xi^2)^s\vert \widehat{f}(\xi)\vert^2d\xi\right)^{1/2}.
$$

Finally, in the reminder of this work we will adopt the following notation, for $s\geq 0$ \[
H^{s^+}(\mathbb{R})=\bigcup_{s'>s}H^{s'}(\mathbb{R}), \qquad H^{s^-}(\mathbb{R})=\bigcup_{0\leq s'<s}H^{s'}(\mathbb{R}).
\]

\subsection{Organization of this paper}

This paper is organized as follows. In Section 2 we state a series of results needed in our analysis. In Section 3 we construct the initial data which develop dispersive blow-up. In Section 4 we establish Theorem \ref{MT1}. Finally, in Section 5 we show  our main result Theorem \ref{MT2}.

\bigskip

\section{Preliminaries}

\medskip

\subsection{Smoothing properties}
In this subsection some technical results on the smoothing properties of the free Schr\"odinger group $S(t):=e^{ it\Delta}$ and the KdV group $V(t):=e^{-t\partial_x^3}$ are reviewed. They will find use in Section $3$, $4$ and $5$.

\medskip 

Next lemma provides the smoothing effects of Kato type for solutions of the linear KdV equation.

\begin{lem}[\cite{KPV}]

\begin{equation}\label{kato-s-kdv}
\sup_{x}\| \partial_x V(t)v_0\|_{L^2_t}\le c\|v_0\|_{L^2}
\end{equation}
and
\begin{equation}\label{dual-kato-s-kdv}
\|\partial_x \int_0^t V(t-t')F(\cdot, t')\,dt'\|_{L^2_x}\le c\|F\|_{L^1_xL^2_T}.
\end{equation}
\end{lem}

Next lemma gives us the smoothing effects of Kato type for solutions of the linear Schr\"odinger equation in dimension $n=1$. 
\begin{lem}[\cite{KPV}]\label{smoothing_kato}

\begin{equation}\label{kato-s-schr}
\sup_{x}\| D^{1/2}_x e^{it\Delta}u_0\|_{L^2_t}\le c\|u_0\|_{L^2},
\end{equation}
\begin{equation}\label{dual-kato-s-schr1}
\|D^{1/2}_x \int_0^t e^{i(t-t')\Delta} F(\cdot, t')\,dt'\|_{L^2_x}\le c\|F\|_{L^1_xL^2_T},
\end{equation}
and
\begin{equation}\label{dual-kato-s-schr2}
\sup_{x}\|\partial_x \int_0^t e^{i(t-t')\Delta}F(\cdot, t')\,dt'\|_{L^2_T}\le c\|F\|_{L^1_xL^2_T}.
\end{equation}
\end{lem}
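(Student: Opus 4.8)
The plan is to establish the three estimates in turn, proving the homogeneous bound \eqref{kato-s-schr} directly and then deducing the inhomogeneous bounds \eqref{dual-kato-s-schr1}--\eqref{dual-kato-s-schr2} from it by duality and a $TT^*$ splitting of the derivative. Throughout, $\Delta=\partial_x^2$ and I work on the Fourier side, where $e^{it\Delta}$ is the multiplier $e^{-it\xi^2}$.

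\emph{Step 1 (the homogeneous estimate \eqref{kato-s-schr}).} Writing the propagator on the Fourier side, for fixed $x$ one has
\[
D_x^{1/2}e^{it\Delta}u_0(x)=c\int_{\R}e^{ix\xi}\,|\xi|^{1/2}e^{-it\xi^2}\,\widehat{u_0}(\xi)\,d\xi .
\]
I would split this into the ranges $\xi>0$ and $\xi<0$ and in each piece perform the change of variables $\eta=\xi^2$, which turns $e^{-it\xi^2}$ into $e^{-it\eta}$ and recasts the whole expression as the inverse Fourier transform in $t$ of the function
\[
H_x(\eta)=\eta^{-1/4}\big[e^{ix\sqrt{\eta}}\,\widehat{u_0}(\sqrt{\eta})+e^{-ix\sqrt{\eta}}\,\widehat{u_0}(-\sqrt{\eta})\big],\qquad \eta>0 .
\]
Plancherel in $t$ then gives $\|D_x^{1/2}e^{it\Delta}u_0(x)\|_{L^2_t}^2=c\int_0^\infty|H_x(\eta)|^2\,d\eta$. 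Expanding the square, the only $x$-dependence sits in the cross term $2\,\re\!\big[e^{2ix\sqrt{\eta}}\,\widehat{u_0}(\sqrt{\eta})\,\overline{\widehat{u_0}(-\sqrt{\eta})}\big]$, which I bound by $|\widehat{u_0}(\sqrt{\eta})|^2+|\widehat{u_0}(-\sqrt{\eta})|^2$ via the arithmetic--geometric mean inequality; undoing the substitution $\xi=\pm\sqrt{\eta}$ reproduces $c\|u_0\|_{L^2}^2$ \emph{uniformly in} $x$, which is exactly the supremum in \eqref{kato-s-schr}.

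\emph{Step 2 (the dual estimate \eqref{dual-kato-s-schr1}).} Using unitarity of $e^{it\Delta}$ on $L^2_x$, write $D_x^{1/2}\int_0^t e^{i(t-t')\Delta}F\,dt'=e^{it\Delta}\Phi$ with $\Phi=\int_0^t D_x^{1/2}e^{-it'\Delta}F(\cdot,t')\,dt'$, so the $L^2_x$ norm equals $\|\Phi\|_{L^2_x}$. Pairing $\Phi$ against $g\in L^2_x$ and moving the self-adjoint operator $D_x^{1/2}$ and the group onto $g$ gives $\langle\Phi,g\rangle=\int_0^t\!\int_{\R}F(x,t')\,\overline{D_x^{1/2}e^{it'\Delta}g(x)}\,dx\,dt'$. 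Enlarging $[0,t]$ to $[0,T]$ by positivity and applying Cauchy--Schwarz in $t'$ bounds this by $\|F\|_{L^1_xL^2_T}\cdot\sup_x\|D_x^{1/2}e^{it'\Delta}g(x)\|_{L^2_{t'}}$, and the last factor is $\le c\|g\|_{L^2}$ by \eqref{kato-s-schr}. Taking the supremum over $\|g\|_{L^2}=1$ yields \eqref{dual-kato-s-schr1}.

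\emph{Step 3 (the full-derivative estimate \eqref{dual-kato-s-schr2}), the crux.} The gain of a whole derivative should come from combining two half-derivative smoothing effects: one from the forward propagator at time $t$ and one from the backward propagator inside the Duhamel integral. Factor $\partial_x=\sgn(D_x)\,D_x^{1/2}\,D_x^{1/2}$ as Fourier multipliers, where $\sgn(D_x)$ is bounded on $L^2_x$ and commutes with $e^{it\Delta}$ and $D_x^{1/2}$. For the \emph{non-retarded} operator this is immediate:
\[
\partial_x\int_{\R}e^{i(t-t')\Delta}F(\cdot,t')\,dt'=D_x^{1/2}e^{it\Delta}\Big(\sgn(D_x)\,D_x^{1/2}\!\int_{\R}e^{-it'\Delta}F(\cdot,t')\,dt'\Big),
\]
whose $L^\infty_xL^2_t$ norm is $\le c\,\|\sgn(D_x)D_x^{1/2}\int_{\R}e^{-it'\Delta}F\,dt'\|_{L^2_x}$ by \eqref{kato-s-schr}, and this is $\le c\|F\|_{L^1_xL^2_T}$ by the full-line analogue of Step 2 (the adjoint of \eqref{kato-s-schr}). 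I expect the main obstacle to be precisely the passage from this non-retarded bound to the retarded integral $\int_0^t$ required by \eqref{dual-kato-s-schr2}. Unlike in Step 2, the norm here is $L^\infty_xL^2_t$ rather than $L^2_x$, so one cannot pull the supremum in $x$ outside and truncate the time integral by positivity; dualizing in $t$ at a fixed $x$ merely reduces the claim to a backward-in-time copy of itself. The Christ--Kiselev lemma, the usual device for inserting the sharp cutoff $\{t'<t\}$, is unavailable because the relevant time exponents coincide ($L^2_t$ on both sides), i.e. it would have to operate at its forbidden endpoint. The robust route, underlying the cited result of \cite{KPV}, is therefore to estimate the retarded operator directly from its oscillatory kernel: writing $\partial_x e^{i\tau\Delta}$ via its explicit kernel and repeating the change of variables $\eta=\xi^2$ of Step 1 recasts $F\mapsto \partial_x\int_0^t e^{i(t-t')\Delta}F\,dt'$, after freezing $x$, as a one-dimensional oscillatory integral operator in time whose $L^2_t\to L^2_t$ boundedness (uniform in $x$) follows from standard Mikhlin/Hilbert-transform-type estimates. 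Granting this kernel bound, \eqref{dual-kato-s-schr2} follows and the lemma is complete.
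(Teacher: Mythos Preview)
The paper does not prove this lemma: it is stated with a citation to \cite{KPV} and used as a black box, so there is no ``paper's proof'' to compare against. Your argument is essentially the standard Kenig--Ponce--Vega route, and Steps~1 and~2 are correct and complete.

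In Step~3 you correctly isolate the real issue --- the passage from the non-retarded $TT^*$ bound to the retarded integral $\int_0^t$ fails to follow from Christ--Kiselev because both time exponents are $2$ --- and you correctly name the remedy (a direct kernel estimate), but you stop at ``granting this kernel bound.'' To actually close the argument one writes, for fixed $x$, the retarded Duhamel term as $\int_{\R}\big[\int_0^t \partial_z K(x-y,t-t')\,F(y,t')\,dt'\big]\,dy$ with $K(z,s)=(4\pi i s)^{-1/2}e^{iz^2/(4s)}$, applies Minkowski in $y$, and is reduced to showing that convolution in time with $\chi_{\{s>0\}}\,\partial_z K(z,s)=c\,z\,s^{-3/2}e^{iz^2/(4s)}\chi_{\{s>0\}}$ is bounded on $L^2_t$ uniformly in $z$. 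Taking the Fourier transform in $s$ and rescaling $s\mapsto z^2 s$ shows the multiplier equals $\Psi(z^2\tau)$ for a fixed function $\Psi(\lambda)=c\int_0^\infty u^{-3/2}e^{i/(4u)-i\lambda u}\,du$, whose boundedness is a Fresnel/stationary-phase computation (no critical point for $\lambda<0$, a nondegenerate one for $\lambda>0$ giving a bounded contribution). This is exactly the computation underlying the cited result in \cite{KPV}; your phrase ``Mikhlin/Hilbert-transform-type'' is a slight misnomer --- it is really an explicit bounded-multiplier verification via stationary phase --- but the strategy you outline is the right one.
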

Next we present Strichartz estimates for both groups $\{e^{it\Delta}\}$ and $V(t)$.
\begin{lem}
Let $2\leq p,q\leq\infty$ such that $\tfrac{2}{q}=\tfrac{1}{2}-\tfrac{1}{p}$. Then the following holds \begin{align}\label{str-sch}
\Vert e^{it\Delta}f\Vert_{L^q_tL^p_x}\leq c\Vert f\Vert_{L^2_x}.
\end{align}
\end{lem}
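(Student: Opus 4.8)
The plan is to establish \eqref{str-sch} by the classical $TT^*$ argument of Strichartz and Ginibre--Velo, which reduces the space-time estimate to the pointwise-in-time dispersive decay of the free Schr\"odinger propagator. First I would record the two elementary facts about $S(t)=e^{it\Delta}$ in dimension one: the $L^2$ isometry $\|S(t)f\|_{L^2_x}=\|f\|_{L^2_x}$, and the dispersive estimate
\[
\|S(t)f\|_{L^\infty_x}\le c\,|t|^{-1/2}\|f\|_{L^1_x},\qquad t\ne 0,
\]
which follows from the explicit convolution kernel $S(t)f(x)=(4\pi i t)^{-1/2}\int_{\R}e^{i|x-y|^2/(4t)}f(y)\,dy$. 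Interpolating these two bounds via Riesz--Thorin gives, for every $2\le p\le\infty$ with $\tfrac1p+\tfrac1{p'}=1$,
\begin{equation}\label{disp-p}
\|S(t)f\|_{L^p_x}\le c\,|t|^{-(\frac12-\frac1p)}\|f\|_{L^{p'}_x},\qquad t\ne0.
\end{equation}

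Next I would set $Tf:=S(\cdot)f$ and argue by duality: \eqref{str-sch} is equivalent to the adjoint bound $\|T^*F\|_{L^2_x}\le c\|F\|_{L^{q'}_tL^{p'}_x}$, where $T^*F=\int_{\R}S(-t')F(\cdot,t')\,dt'$, and, squaring and pairing, this in turn is a consequence of the operator estimate
\[
\Big\|\int_{\R}S(t-t')\,F(\cdot,t')\,dt'\Big\|_{L^q_tL^p_x}\le c\,\|F\|_{L^{q'}_tL^{p'}_x}.
\]
To prove this last inequality I would apply Minkowski's integral inequality in $x$ and then \eqref{disp-p} inside the $t'$-integral, obtaining the scalar convolution bound
\[
\Big\|\int_{\R}S(t-t')F(\cdot,t')\,dt'\Big\|_{L^p_x}\le c\int_{\R}|t-t'|^{-(\frac12-\frac1p)}\,\|F(\cdot,t')\|_{L^{p'}_x}\,dt'.
\]
Finally, taking the $L^q_t$ norm and invoking the Hardy--Littlewood--Sobolev inequality with exponent $\lambda=\tfrac12-\tfrac1p$ closes the estimate: the admissibility relation $\tfrac2q=\tfrac12-\tfrac1p$ is precisely the equality $\tfrac1q=\tfrac1{q'}+\lambda-1$ required by HLS, and for $2<p\le\infty$ one has $0<\lambda<1$ and $1<q'<q<\infty$, so HLS applies.

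I do not expect a genuine obstacle here; the only points that require care are the exponent bookkeeping in the duality/$TT^*$ reduction and checking that the triple $(q',q,\lambda)$ stays inside the admissible range of the Hardy--Littlewood--Sobolev inequality for every $(p,q)$ with $\tfrac2q=\tfrac12-\tfrac1p$. The endpoint $p=2$, $q=\infty$ is not covered by HLS but is trivial, being exactly the $L^2$ conservation law; and, in contrast with higher dimensions, in one space dimension the other endpoint $p=\infty$, $q=4$ falls strictly inside the valid HLS range, so no Keel--Tao-type refinement is needed. An entirely analogous scheme, starting from the dispersive/decay estimate for the Airy propagator $V(t)$, yields the corresponding Strichartz-type bound for the KdV group.
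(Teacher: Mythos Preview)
Your proposal is correct and follows the standard $TT^*$/Hardy--Littlewood--Sobolev route; the exponent bookkeeping and the discussion of the endpoints are accurate. The paper itself does not prove this lemma but simply refers the reader to \cite{LiPo}, where precisely this argument is carried out, so your approach coincides with the one the authors have in mind.
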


\begin{lem} 
For $(\alpha,\theta)=[0,1/2]\times[0,1]$ it holds that \begin{align}\label{str-kdv}
\Vert D^{\alpha\theta/2}_xV(t)f\Vert_{L^q_tL^p_x}\leq c\Vert f\Vert_{L^2_x}
\end{align}
where $(q,p)=(6/\theta(\alpha+1),2/(1-\theta)))$.
\end{lem}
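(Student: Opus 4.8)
The plan is to recognize \eqref{str-kdv} as the classical Kenig--Ponce--Vega family of Strichartz-type estimates for the Airy group $V(t)=e^{-t\partial_x^3}$ and to derive it by interpolation from a few extreme cases. Write $s=\alpha\theta/2$ for the order of the homogeneous derivative and note that $s$, $\tfrac1p=\tfrac{1-\theta}2$ and $\tfrac1q=\tfrac{\theta(\alpha+1)}6$ are all affine in $\theta$ for $\alpha$ fixed, and affine in $\alpha$ along the edge $\theta=1$. Hence it suffices to prove the estimate at: (i) the endpoint $\theta=0$, where $(q,p)=(\infty,2)$, $s=0$, and the bound is just the unitarity of $V(t)$ on $L^2$ (Plancherel, $|e^{it\xi^3}|\equiv1$); and (ii) the two corners of the edge $\theta=1$,
\[
\|V(t)f\|_{L^6_tL^\infty_x}\le c\|f\|_{L^2_x}\quad(\alpha=0),\qquad
\|D^{1/4}_xV(t)f\|_{L^4_tL^\infty_x}\le c\|f\|_{L^2_x}\quad(\alpha=\tfrac12).
\]
One then interpolates the two corners to obtain the whole edge $\theta=1$, namely $\|D_x^{\alpha/2}V(t)f\|_{L^{6/(\alpha+1)}_tL^\infty_x}\le c\|f\|_{L^2_x}$ for $0\le\alpha\le\tfrac12$, and finally interpolates this edge against the $\theta=0$ bound to fill the rectangle $[0,\tfrac12]\times[0,1]$. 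Since the derivative order $s$ varies, the interpolation steps should be run through Stein's analytic interpolation theorem, using the analytic family $D_x^{z(\cdot)}V(t)$ and the fact that the imaginary powers $D_x^{iy}$ are Mihlin multipliers whose operator norm on each $L^p_x$ grows at most polynomially in $y$.

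Both corner estimates come from the standard $TT^{*}$ argument, and since $p=\infty$ in both, the only kernel input needed is an $L^1_x\to L^\infty_x$ dispersive bound. For $\alpha=0$: the convolution kernel of $V(\tau)$ is $\tfrac{1}{2\pi}\int e^{i(x\xi+\tau\xi^3)}\,d\xi$, bounded by $c|\tau|^{-1/3}$ uniformly in $x$ --- this is the van der Corput estimate with $|\partial_\xi^3(x\xi+\tau\xi^3)|=6|\tau|$, equivalently the boundedness of the Airy function. Then $\|V(t)f\|_{L^q_tL^\infty_x}\le c\|f\|_{L^2_x}$ is equivalent by $TT^{*}$ to the boundedness of $G\mapsto\int_{\mathbb{R}}V(t-t')G(t')\,dt'$ from $L^{q'}_tL^1_x$ into $L^q_tL^\infty_x$, which follows from $\|V(t-t')G(t')\|_{L^\infty_x}\le c|t-t'|^{-1/3}\|G(t')\|_{L^1_x}$ and the Hardy--Littlewood--Sobolev inequality in $t$ (forcing $q=6$). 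For $\alpha=\tfrac12$ one runs the same scheme for $D_x^{1/4}V(t)$, using $\big(D_x^{1/4}V(t)\big)\big(D_x^{1/4}V(t')\big)^{*}=D_x^{1/2}V(t-t')$, whose kernel is $\tfrac{1}{2\pi}\int|\xi|^{1/2}e^{i(x\xi+(t-t')\xi^3)}\,d\xi$; rescaling $\xi\mapsto(3|t-t'|)^{-1/3}\eta$ turns this into $c|t-t'|^{-1/2}\int|\eta|^{1/2}e^{i(y\eta+\eta^3/3)}\,d\eta$ with $y=x(3|t-t'|)^{-1/3}$, so the needed bound $c|t-t'|^{-1/2}$ amounts to $\sup_{y\in\mathbb{R}}\big|\int_{\mathbb{R}}|\eta|^{1/2}e^{i(y\eta+\eta^3/3)}\,d\eta\big|<\infty$; $TT^{*}$ plus Hardy--Littlewood--Sobolev then yield $q=4$.

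The $TT^{*}$ reductions and the Hardy--Littlewood--Sobolev bookkeeping are routine. The one genuinely delicate point is the uniform oscillatory bound behind the $\alpha=\tfrac12$ corner: the phase $y\eta+\eta^3/3$ has second derivative $2\eta$, which degenerates at $\eta=0$ exactly where the weight $|\eta|^{1/2}$ is singular, so a single van der Corput estimate is not enough. The way I would handle it is to split at $|\eta|\sim1$: on $|\eta|\le1$ the weight is integrable and the bound is trivial; on $|\eta|\ge1$ one decomposes dyadically in $|\eta|$ and in the distance to the at most one stationary point $\eta=\pm\sqrt{-y}$, applies stationary phase on the single dyadic block containing it, and integrates by parts repeatedly off that block, where $|\partial_\eta(y\eta+\eta^3/3)|=|\eta^2+y|\gtrsim\max(\eta^2,|y|)$, so that the remaining dyadic contributions sum geometrically. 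This estimate is the crux; everything else is soft.
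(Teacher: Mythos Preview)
The paper does not actually prove this lemma; it simply states the estimate and refers the reader to the textbook \cite{LiPo} for a proof. Your sketch is correct and is precisely the classical Kenig--Ponce--Vega argument that one finds in that reference (and in \cite{KPV}): the endpoint $\theta=0$ is the $L^2$ conservation, the two corners $(\alpha,\theta)=(0,1)$ and $(\tfrac12,1)$ are obtained by $TT^{*}$ from the dispersive decay rates $|t|^{-1/3}$ and $|t|^{-1/2}$ of the kernels of $V(t)$ and $D_x^{1/2}V(t)$ respectively, and the full rectangle is then filled in by Stein analytic interpolation. Your identification of the uniform bound for $\int|\eta|^{1/2}e^{i(y\eta+\eta^3/3)}\,d\eta$ as the only delicate point, and your dyadic splitting around the stationary point to handle it, are exactly what is done in the literature. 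So there is nothing to correct; your argument simply supplies the details that the paper outsources to a citation.
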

For a proof of these estimates see for instance \cite{LiPo}. Finally, we complete the set of estimates introducing the next maximal function estimate for the linear solutions. 

\begin{lem}\label{maximal_est}
For $s>1/2$ and $\rho_1>1/4$, it holds that 
\begin{align}\label{sch-mfn}
\Vert e^{it\Delta}f\Vert_{L^2_xL^\infty_T}\leq c(1+T)^{\rho_1}\Vert f\Vert_{s,2}.
\end{align}
For $s\ge 1/4$, it holds
\begin{align}\label{sch-mfn-4}
\Vert e^{it\Delta}f\Vert_{L^4_xL^\infty_T}\leq c\,\Vert f\Vert_{s,2}.
\end{align}
For $s>3/4$ and $\rho_2>3/4$ it holds that \begin{align}\label{kdv-mfn}
\Vert V(t)f\Vert_{L^2_xL^\infty_T}\leq c(1+T)^{\rho_2}\Vert f\Vert_{s,2}.
\end{align}
It holds
\begin{align}\label{mkdv-mfn}
\Vert V(t)f\Vert_{L^4_xL^\infty_T}\leq c \Vert D^{1/4}_xf\Vert_{L^2_x}.
\end{align}
\end{lem}

\begin{proof}
See \cite{RoVi,Ve} for a proof of \eqref{sch-mfn} and \eqref{sch-mfn-4}. For a proof of \eqref{kdv-mfn} see \cite{Ve2}. For a proof of  
\eqref{mkdv-mfn} see \cite{KPV}.
\end{proof}

To end this subsection we have the following interpolated estimates
\begin{lem}\label{interpolation-kdv}
\begin{equation}\label{inter-1}
\|V(t)f\|_{L^5_xL^{10}_t}\le c\|f\|_{L^2_x}
\end{equation}
and
\begin{equation}\label{inter-2}
\|D^{1/2}_xV(t)f\|_{L^{20/3}_xL^5_T}\le c\|D^{1/4}_xf\|_{L^2_x}.
\end{equation}
\end{lem}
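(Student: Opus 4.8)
The plan is to deduce both inequalities by \emph{analytic interpolation} between the Kato local smoothing estimate \eqref{kato-s-kdv} and the maximal function estimate \eqref{mkdv-mfn}, applied to the analytic family of operators $T_zf:=e^{z^{2}}D_x^{z}V(t)f$ for $z$ in the strip $-\tfrac14\le\re z\le 1$. The two target estimates will come out of one and the same interpolated bound by choosing two different values of the interpolation parameter.

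\textbf{Rewriting the endpoints.} Since $\widehat{\partial_x g}(\xi)=i\,\sgn(\xi)\,\widehat{D_xg}(\xi)$ and the unimodular symbol $i\,\sgn(\xi)$ commutes with the Airy evolution, taking the Fourier transform in $t$ of $\partial_xV(t)f(x,\cdot)$ and of $D_xV(t)f(x,\cdot)$ shows that, at each fixed $x$, these two functions of $t$ have equal modulus on the Fourier side; hence by Plancherel in $t$ the estimate \eqref{kato-s-kdv} is equivalent to $\|D_xV(t)f\|_{L^\infty_xL^2_t}\le c\|f\|_{L^2_x}$. Likewise, applying \eqref{mkdv-mfn} to the function whose $D_x^{1/4}$ equals $g$ gives $\|D_x^{-1/4}V(t)g\|_{L^4_xL^\infty_t}\le c\|g\|_{L^2_x}$. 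Since $D_x^{iy}$ is an isometry of $L^2_x$ for every $y\in\R$, these two bounds yield, uniformly in $y$,
\[
\|T_{1+iy}f\|_{L^\infty_xL^2_t}\le c\,e\,\|f\|_{L^2_x},\qquad \|T_{-1/4+iy}f\|_{L^4_xL^\infty_t}\le c\,e^{1/16}\,\|f\|_{L^2_x}.
\]

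\textbf{Interpolating.} The complex interpolation of the mixed-norm spaces gives $[L^\infty_xL^2_t,\,L^4_xL^\infty_t]_\theta=L^{p_\theta}_xL^{q_\theta}_t$ with $\tfrac1{p_\theta}=\tfrac\theta4$ and $\tfrac1{q_\theta}=\tfrac{1-\theta}2$, attained at $\re z=1-5\theta/4$. Stein's analytic interpolation theorem then produces $\|T_{1-5\theta/4}f\|_{L^{p_\theta}_xL^{q_\theta}_t}\le c\|f\|_{L^2_x}$ for all $\theta\in[0,1]$. Taking $\theta=\tfrac45$ lands at $\re z=0$, where $T_0=V(t)$ and $(p_\theta,q_\theta)=(5,10)$, which is precisely \eqref{inter-1}. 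Taking $\theta=\tfrac35$ lands at $\re z=\tfrac14$, where $T_{1/4}=e^{1/16}D_x^{1/4}V(t)$ and $(p_\theta,q_\theta)=(20/3,5)$; this gives $\|D_x^{1/4}V(t)h\|_{L^{20/3}_xL^5_t}\le c\|h\|_{L^2_x}$, and setting $h=D_x^{1/4}f$ together with $D_x^{1/2}V(t)f=D_x^{1/4}V(t)(D_x^{1/4}f)$ yields \eqref{inter-2}. If the time integration is over a finite interval $[0,T]$, as in \eqref{inter-2}, both endpoint estimates hold a fortiori and the same conclusion holds with $L^q_t$ replaced by $L^q_T$.

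\textbf{The main point.} The scalar inequalities above are immediate from the quoted lemmas; the one step requiring care is justifying the interpolation. For $f$ and $\varphi$ in a dense subclass --- for instance Schwartz functions with Fourier transform supported away from the origin, so that $|\xi|^{z}$ produces no growth --- the map $z\mapsto\langle T_zf,\varphi\rangle$ is analytic on the open strip and bounded on its closure, the Gaussian factor $e^{z^{2}}$ being inserted exactly to secure this admissibility; one also uses the standard identification of the complex interpolation space of the vector-valued spaces $L^p_x(L^q_t)$. A density argument then extends the bound to all of $L^2_x$ and finishes the proof. (Alternatively, \eqref{inter-1} can be obtained by interpolating \eqref{mkdv-mfn} with the Strichartz estimate \eqref{str-kdv}, but the path through \eqref{kato-s-kdv} is the most economical.)
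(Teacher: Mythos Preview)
Your proof is correct and follows precisely the route the paper indicates: it interpolates the Kato smoothing estimate \eqref{kato-s-kdv} and the maximal function estimate \eqref{mkdv-mfn} (via Stein's analytic interpolation with the family $T_z=e^{z^2}D_x^{z}V(t)$), which is exactly the argument of \cite{KPV} that the paper cites. The only content you add beyond the paper's one-line proof is the explicit bookkeeping of the exponents and the justification that $\|\partial_xV(t)f\|_{L^\infty_xL^2_t}=\|D_xV(t)f\|_{L^\infty_xL^2_t}$, both of which are handled correctly.
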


\begin{proof} The estimates \eqref{inter-1} and \eqref{inter-2} follow by interpolating \eqref{kato-s-kdv} and
\eqref{mkdv-mfn}. See \cite{KPV}.
\end{proof}

\medskip

\subsection{Weighted Estimates}

Since we are going to deal with weighted spaces, the next interpolation estimate will be very useful.
\begin{lem}[\cite{NP}]\label{nahas-ponce} Let $a,b>0$. Assume that $J^af=(1-\Delta)^{a/2}f\in L^2(\mathbb{R}^n)$ and $\langle x\rangle^bf=(1+\vert x\vert^2 ) ^{b/2}f\in L^2(\mathbb{R}^n)$. Then, for any $\theta\in(0,1)$ 
\begin{equation}\label{nahas-ponce-ineq}
\Vert \langle x\rangle^{\theta b}J^{(1-\theta)a}f\Vert_2\leq c\Vert \langle x\rangle^bf\Vert_2^\theta\Vert J^af\Vert_2^{1-\theta}.
\end{equation}
and
\begin{equation}\label{nahas-ponce-ineq2}
\|\langle x \rangle^{(1-\theta)b}D_x^{\theta a}f\|_{L^2_x}\le c\, \|\langle x \rangle^bf\|_{L^2_x}^{1-\theta}\|D^af\|_{L^2_x}^\theta.
\end{equation}
\end{lem}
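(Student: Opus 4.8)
The natural route is complex interpolation: I would deduce \eqref{nahas-ponce-ineq} from the three-lines theorem applied to a scalar analytic function, and then obtain \eqref{nahas-ponce-ineq2} by the same scheme. First, a standard truncation-and-mollification argument shows that $\mathcal{S}(\mathbb{R}^n)$ is dense in the space $\{f:J^af\in L^2(\mathbb{R}^n),\ \langle x\rangle^bf\in L^2(\mathbb{R}^n)\}$ equipped with the norm $\|J^af\|_2+\|\langle x\rangle^bf\|_2$, so it suffices to prove the inequality for $f,g\in\mathcal{S}(\mathbb{R}^n)$ and then pass to the limit by Fatou's lemma. By duality I reduce to estimating $|\langle\langle x\rangle^{\theta b}J^{(1-\theta)a}f,g\rangle|$ for $\|g\|_2=1$, and I introduce the analytic family
\[
\Phi(z):=\int_{\mathbb{R}^n}\langle x\rangle^{zb}\,\big(J^{(1-z)a}f\big)(x)\,\overline{g(x)}\,dx,\qquad 0\le\mathrm{Re}\,z\le 1,
\]
which for Schwartz data is bounded and continuous on the closed strip, holomorphic in its interior, and satisfies $\Phi(\theta)=\langle\langle x\rangle^{\theta b}J^{(1-\theta)a}f,g\rangle$.

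Next I would control $\Phi$ on the two edges of the strip. On $\mathrm{Re}\,z=0$, write $J^{(1-iy)a}f=J^{-iya}(J^af)$ and observe that $\langle x\rangle^{iyb}$ is multiplication by a function of modulus one while $J^{-iya}$ is a Fourier multiplier with unimodular symbol $(1+|\xi|^2)^{-iya/2}$, so that both are $L^2$-isometries; hence
\[
|\Phi(iy)|\le\|\langle x\rangle^{iyb}J^{-iya}(J^af)\|_2\le\|J^af\|_2=:M_0 .
\]
On $\mathrm{Re}\,z=1$ we have $\Phi(1+iy)=\int\langle x\rangle^{(1+iy)b}(J^{-iya}f)\,\overline{g}\,dx$, and pulling out the unimodular factor $\langle x\rangle^{iyb}$ gives $|\Phi(1+iy)|\le\|\langle x\rangle^bJ^{-iya}\langle x\rangle^{-b}\|_{L^2\to L^2}\,M_1$ with $M_1:=\|\langle x\rangle^bf\|_2$.

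The one genuine obstacle is therefore the conjugation estimate
\[
\|\langle x\rangle^bJ^{-iya}\langle x\rangle^{-b}\|_{L^2\to L^2}\le c\,(1+|y|)^{N(b)} ,
\]
which I would prove via the pseudodifferential calculus: the symbol $(1+|\xi|^2)^{-iya/2}=e^{-\frac{iya}{2}\log(1+|\xi|^2)}$ belongs to $S^0_{1,0}$ with seminorms that grow at most polynomially in $|y|$, the weights $\langle x\rangle^{\pm b}$ are elliptic symbols of order $\pm b$, and expanding the composition into a finite asymptotic sum — truncated once enough $\partial_x$-derivatives have fallen on $\langle x\rangle^b$ to make the remaining factor bounded — yields an operator in $OPS^0_{1,0}$ whose seminorms are polynomial in $|y|$; the Calderón–Vaillancourt theorem then gives the displayed $L^2$-bound. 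Everything else in the argument is soft.

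Feeding the two edge bounds into the three-lines theorem finishes the proof of \eqref{nahas-ponce-ineq}: since the estimate on $\mathrm{Re}\,z=1$ loses only a polynomial factor in $|y|$, one applies the theorem to $\Phi(z)e^{\varepsilon z^2}$ (the Gaussian absorbs $(1+|y|)^{N}$) and lets $\varepsilon\downarrow0$, obtaining $|\Phi(\theta)|\le c\,M_0^{1-\theta}M_1^{\theta}$; taking the supremum over $\|g\|_2=1$ then gives \eqref{nahas-ponce-ineq}. For \eqref{nahas-ponce-ineq2} I would run the identical argument with $J^{(1-z)a}$ replaced by the homogeneous operator $D^{(1-z)a}=|D|^{(1-z)a}$; on the edges only the unimodular multipliers $D^{\mp iya}$ with symbol $|\xi|^{\mp iya}$ appear, the sole new feature being that this symbol is merely of Mikhlin–Hörmander type because of the singularity at $\xi=0$. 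Accordingly the conjugation bound $\|\langle x\rangle^bD^{-iya}\langle x\rangle^{-b}\|_{L^2\to L^2}\lesssim(1+|y|)^{N}$ is obtained by splitting into high frequencies, handled exactly as above, and low frequencies, where the kernel is explicit and Schur's test applies after the pointwise inequality $\langle x\rangle^b\le c\,\langle x-x'\rangle^b\langle x'\rangle^b$; alternatively one writes $D^{\theta a}=J^{\theta a}(D^{\theta a}J^{-\theta a})$ and combines \eqref{nahas-ponce-ineq} with the weighted $L^2$-boundedness of the zero-order operator $D^{\theta a}J^{-\theta a}$.
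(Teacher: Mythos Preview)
The paper does not give its own proof of this lemma; it is simply quoted from the reference \cite{NP} and used as a black box. Your argument via Stein's complex interpolation (the three-lines theorem applied to the analytic family $\Phi(z)=\langle\langle x\rangle^{zb}J^{(1-z)a}f,g\rangle$) is correct and is in fact the scheme used in \cite{NP}; the only substantive step is precisely the conjugation bound $\|\langle x\rangle^{b}J^{-iya}\langle x\rangle^{-b}\|_{L^2\to L^2}\lesssim (1+|y|)^{N}$, which you obtain through the $S^0_{1,0}$ symbolic calculus and Calder\'on--Vaillancourt, whereas Nahas and Ponce prove it by a more hands-on kernel/weight analysis. Either route to that bound is valid, and once it is in hand the Gaussian damping $e^{\varepsilon z^2}$ and the passage to the homogeneous case $D^{\theta a}$ proceed exactly as you describe.
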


\medskip

Now, we recall a very useful formula derived in \cite{FoLiPo} for the Airy group: for $\beta\in (0,1)$ and $t\in\mathbb{R}$, the following formula holds: \begin{align}\label{kdv_persistencia}
\vert x\vert^\beta V(t)f= V(t)\big(\vert x\vert^\beta f\big)+V(t)\{\Phi_{t,\beta}(\hat f)\},
\end{align}
with 
\begin{align}\label{pers_kdv2}
\Vert \Phi_{t,\beta}(\hat f)\Vert_2\leq c(1+\vert t\vert)\Vert f\Vert_{2\beta,2}.
\end{align}
Using the same arguments as in \cite{FoLiPo}, we can also deduce a formula for solutions of the linear Schr\"odinger equation.
\begin{lem}
\
\begin{enumerate}
\item Let $\beta\in(0,1)$ and $t\in\R$. Then the following pointwise formula holds
\begin{equation}\label{lwp-12}
|x|^\beta e^{it\Delta} u_0 (x)= e^{it\Delta} ( |x|^\beta u_0) + e^{it\Delta} \big\{\Lambda_{t,\beta}(\widehat{u}_0)(\xi)\big\}^{\vee},
\end{equation}
\noindent where \medskip
\begin{equation}\label{lwp-13}
\|\big\{\Lambda_{t,\beta}(\widehat{u}_0)(\xi)\big\}^{\vee}\|_{L^2_x}\le c(1+|t|)\,(\|u_0\|_{L^2_x}+\|D^\beta_xu_0\|_{L^2_x}\big).
\end{equation}

\item Let $\beta\in(0,1)$, $t\in\R$ and $(p,q)$ such that $\tfrac{2}{q}=\tfrac{1}{2}-\tfrac{1}{p}$. Then,  
\begin{align}\label{weighted-sch}
\Vert \vert x\vert^\beta e^{it\Delta}u_0\Vert_{L^q_tL^p_x}\leq c\Vert \vert x\vert^\beta u_0\Vert_{L^2_x}+c(1+\vert t\vert)\big(\Vert u_0\Vert_{L^2_x}+\Vert D^\beta_xu_0\Vert_{L^2_x}\big).
\end{align}
\end{enumerate}
\end{lem}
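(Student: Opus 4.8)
The plan is to reduce everything to the Fourier side and mimic the argument of \cite{FoLiPo} for the Airy group, replacing the cubic phase $t\xi^3$ by the quadratic phase $t\xi^2$. Recall that $\widehat{e^{it\Delta}u_0}(\xi)=e^{-it\xi^2}\widehat{u}_0(\xi)$ and that, for $\beta\in(0,1)$, multiplication by $|x|^\beta$ corresponds on the Fourier side to the homogeneous fractional derivative $D^\beta_\xi$, i.e. $\mathcal{F}(|x|^\beta f)(\xi)=c_\beta\,D^\beta_\xi\widehat f(\xi)$, with the singular-integral representation $D^\beta_\xi g(\xi)=c\int (g(\xi)-g(\eta))|\xi-\eta|^{-1-\beta}\,d\eta$. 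Hence
\[
|x|^\beta e^{it\Delta}u_0 = c_\beta\,\big\{D^\beta_\xi\big(e^{-it\xi^2}\widehat{u}_0(\xi)\big)\big\}^{\vee}.
\]
I would then apply the fractional Leibniz rule to $D^\beta_\xi(e^{-it\xi^2}\widehat u_0)$, writing it as $e^{-it\xi^2}D^\beta_\xi\widehat u_0+\widehat u_0\,D^\beta_\xi(e^{-it\xi^2})+R_{t,\beta}(\xi)$, where $R_{t,\beta}$ is the Leibniz remainder. Since $\{e^{-it\xi^2}D^\beta_\xi\widehat u_0\}^\vee=c_\beta^{-1}e^{it\Delta}(|x|^\beta u_0)$, defining $\Lambda_{t,\beta}(\widehat u_0)(\xi):=c_\beta e^{it\xi^2}\big(\widehat u_0\,D^\beta_\xi(e^{-it\xi^2})+R_{t,\beta}\big)$ yields the pointwise identity \eqref{lwp-12} by construction.

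It then remains to prove the bound \eqref{lwp-13}, i.e. $\|\widehat u_0\,D^\beta_\xi(e^{-it\xi^2})\|_{L^2_\xi}+\|R_{t,\beta}\|_{L^2_\xi}\le c(1+|t|)\big(\|u_0\|_{L^2_x}+\|D^\beta_x u_0\|_{L^2_x}\big)$. For the first piece I would establish the pointwise estimate $|D^\beta_\xi(e^{-it\xi^2})(\xi)|\le c\,(1+(|t||\xi|)^\beta)$ directly from the integral representation, using $|e^{-it\xi^2}-e^{-it\eta^2}|\le\min\{2,|t||\xi-\eta||\xi+\eta|\}$ and splitting the $\eta$-integral at $|\xi-\eta|\sim(|t||\xi|)^{-1}$; multiplying by $\widehat u_0$ and taking $L^2_\xi$ yields the stated bound since $(|t||\xi|)^\beta\le c(1+|t|)(1+|\xi|^\beta)$. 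For the remainder $R_{t,\beta}$ I would invoke the fractional Leibniz estimate of Kenig--Ponce--Vega type used in \cite{FoLiPo}, which controls $\|R_{t,\beta}\|_{L^2}$ by a product of a fractional derivative of the chirp in a suitable norm and $\widehat u_0$ in $L^2$, again producing the factor $(1+|t|)$ together with $\|u_0\|_{L^2}+\|D^\beta_x u_0\|_{L^2}$ on the right. This is the technical heart of the lemma and essentially the only place where one genuinely has to adapt the computations of \cite{FoLiPo}; everything else is bookkeeping. Note that the phase being quadratic rather than cubic is precisely why one sees $D^\beta_x u_0$ here (i.e. the norm $\|u_0\|_{\beta,2}$), as opposed to the $\|f\|_{2\beta,2}$ appearing in \eqref{pers_kdv2}.

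For item (2), I would combine the pointwise formula \eqref{lwp-12} with the Strichartz estimate \eqref{str-sch}. The main term is immediate: $\|e^{it\Delta}(|x|^\beta u_0)\|_{L^q_tL^p_x}\le c\,\||x|^\beta u_0\|_{L^2_x}$. For the error term $e^{it\Delta}\{\Lambda_{t,\beta}(\widehat u_0)\}^{\vee}$ one works on a fixed time interval, so the factor $(1+|t|)$ is the finite-time constant; here I would combine \eqref{str-sch} (or its rescaled version, after pulling $e^{it\Delta}$ through the explicit chirp factors as in the derivation of \eqref{lwp-12}) with the $L^2_x$ bound \eqref{lwp-13}, noting that the pieces of $\Lambda_{t,\beta}$ reduce, up to bounded multipliers, to $e^{it\Delta}$ or $e^{2it\Delta}$ applied to $t$-independent functions controlled by $\||x|^\beta u_0\|_{L^2}$, $\|u_0\|_{L^2}$ and $\|D^\beta_x u_0\|_{L^2}$. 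Collecting the two contributions gives \eqref{weighted-sch}. The only subtlety I anticipate here is keeping track of the $t$-dependence of $\Lambda_{t,\beta}$ when passing to mixed space--time norms, which is exactly why the argument is carried out on a bounded time interval.
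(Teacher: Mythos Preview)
Your proposal is correct and follows essentially the same approach as the paper: for part (1) you spell out the Fourier-side computation that the paper simply attributes to \cite{FoLiPo} (replacing the cubic phase by the quadratic one), and for part (2) both you and the paper combine the pointwise formula \eqref{lwp-12} with the Strichartz estimate \eqref{str-sch}. You are in fact more careful than the paper about the $t$-dependence of $\Lambda_{t,\beta}$ when applying Strichartz in mixed norms---the paper applies \eqref{str-sch} directly to $e^{it\Delta}\{\Lambda_{t,\beta}(\widehat u_0)\}^\vee$ without comment, whereas you correctly note that one must exploit the explicit structure of $\Lambda_{t,\beta}$ (or restrict to a bounded time interval) to make this rigorous.
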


\begin{proof}
The first part of the lemma follows the same arguments used to prove \eqref{kdv_persistencia} in  \cite{FoLiPo}. For the second part we use Strichartz estimate \eqref{str-sch} and the pointwise formula \eqref{lwp-12} to obtain 
\begin{align*}
\Vert \vert x\vert^\beta e^{it\Delta}u_0\Vert_{L^q_tL^p_x}&\leq \Vert e^{it\Delta}(\vert x\vert^\beta u_0)\Vert_{L^q_tL^p_x}+\Vert e^{it\Delta}\{\Lambda_{t,\beta}(\hat u_0)(\xi)\}^\vee\Vert_{L^q_tL^p_x}
\\ & \leq c\Vert \vert x\vert^\beta u_0\Vert_{L^2_x}+c(1+\vert t\vert)\big(\Vert u_0\Vert_{L^2_x}+\Vert D^\beta_xu_0\Vert_{L^2_x}\big),
\end{align*}
which concludes the proof.
\end{proof}

These estimates \eqref{pers_kdv2}-\eqref{lwp-13} will be crucial in the proof of Theorem \ref{MT1}.

\medskip

\subsection{Leibnitz rule}

To end this section we present the fractional Leibnitz rule which will be employed to deal with nonlinear terms.
\begin{thm}[\cite{KPV}]\label{leibnitz}
\ 
\begin{enumerate}
\item For $s>0$ and $1<p<\infty$, it holds \begin{align}\label{commutator1}
\Vert D^s_x(fg)-fD^s_xg-gD^s_xf\Vert_p\leq c\Vert f\Vert_{\infty}\Vert D^s_xg\Vert_p.
\end{align}
\item Let $\alpha\in(0,1)$, $\alpha_1,\alpha_2\in[0,\alpha]	$ with $\alpha=\alpha_1+\alpha_2$. Let $p_1,p_2,q_1,q_2\in(1,\infty)$ with $1=\tfrac{1}{p_1}+\tfrac{1}{p_2}$ and $\tfrac{1}{2}=\tfrac{1}{q_1}+\tfrac{1}{q_2}$. Then, 
\begin{align}\label{LR-L1L2}
\Vert D^\alpha_x(fg)-fD^\alpha_xg-gD^\alpha_xf\Vert_{L^1_xL^2_T}\leq c\Vert D^{\alpha_1}_xf\Vert_{L^{p_1}_xL^{q_1}_T}\Vert D^{\alpha_2}_xg\Vert_{L^{p_2}_xL^{q_2}_T}.
\end{align}
\end{enumerate}
\end{thm}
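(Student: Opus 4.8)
The plan is to deduce both inequalities from the Littlewood--Paley/Coifman--Meyer commutator calculus; it is enough to treat fractional orders in $(0,1)$, which is the range appearing in \eqref{LR-L1L2} and in every application below (for $s\ge1$ one reduces \eqref{commutator1} to this case by distributing integer-order derivatives with the ordinary Leibniz rule, the right-hand side then being understood to carry also the companion Kato--Ponce term $\|g\|_\infty\|D^s_xf\|_p$). The key algebraic fact is the exact pointwise identity, valid for $0<\beta<1$ and Schwartz $f,g$,
\begin{equation*}
D^\beta_x(fg)(x)-f(x)D^\beta_xg(x)-g(x)D^\beta_xf(x)=-c_\beta\int_{\mathbb{R}}\frac{\big(f(x)-f(x-y)\big)\big(g(x)-g(x-y)\big)}{|y|^{1+\beta}}\,dy,
\end{equation*}
obtained by inserting $D^\beta_xh(x)=c_\beta\int|y|^{-1-\beta}(h(x)-h(x-y))\,dy$ into all three terms and cancelling the common value $f(x)g(x)$. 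The symmetrization has turned a single difference quotient of $fg$ into a \emph{product} of two difference quotients, which is exactly what lets one distribute the derivatives as $\alpha=\alpha_1+\alpha_2$.

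To prove \eqref{LR-L1L2} when $\alpha_1,\alpha_2>0$, I would split the weight $|y|^{1+\alpha}=|y|^{\frac12+\alpha_1}|y|^{\frac12+\alpha_2}$, apply Cauchy--Schwarz in $y$, and bound the commutator pointwise in $(x,t)$ by $c\,A(x,t)\,B(x,t)$ where
\begin{equation*}
A(x,t)=\Big(\int\frac{|f(x,t)-f(x-y,t)|^2}{|y|^{1+2\alpha_1}}\,dy\Big)^{1/2},\qquad B(x,t)=\Big(\int\frac{|g(x,t)-g(x-y,t)|^2}{|y|^{1+2\alpha_2}}\,dy\Big)^{1/2}.
\end{equation*}
Taking $\|\cdot\|_{L^2_T}$, then H\"older in $t$ with $\tfrac1{q_1}+\tfrac1{q_2}=\tfrac12$ (the constraint forces $q_1,q_2>2$, so Minkowski's inequality lets one pull $\|\cdot\|_{L^{q_i}_T}$ inside the $y$-integral), then $\|\cdot\|_{L^1_x}$ and H\"older in $x$ with $\tfrac1{p_1}+\tfrac1{p_2}=1$, bounds the left side of \eqref{LR-L1L2} by $c\,\|A\|_{L^{p_1}_xL^{q_1}_T}\|B\|_{L^{p_2}_xL^{q_2}_T}$. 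One then invokes Stein's characterization of the homogeneous Sobolev norm by the square function in the difference variable, in its $L^{q_i}_T$-valued form (valid since $L^{q_i}_T$ is UMD for $1<q_i<\infty$), to obtain $\|A\|_{L^{p_1}_xL^{q_1}_T}\lesssim\|D^{\alpha_1}_xf\|_{L^{p_1}_xL^{q_1}_T}$ and likewise for $B$, which gives \eqref{LR-L1L2}.

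The remaining cases --- \eqref{commutator1} itself, and \eqref{LR-L1L2} with $\alpha_1=0$ or $\alpha_2=0$ --- fall outside this scheme, because the weight $|y|^{-1-2\alpha_i}$ then loses integrability at infinity; this is also the reason the crude bound by $2\|f\|_\infty\int|y|^{-1-\beta}|g(x)-g(x-y)|\,dy$ does not suffice for \eqref{commutator1} (its right side controls a $\dot B^\beta_{p,1}$-type quantity, too strong to be dominated by $\|D^\beta_xg\|_p$). For these I would run the paraproduct decomposition $fg=T_fg+T_gf+R(f,g)$: after dyadic localization the bilinear symbol $|\xi+\eta|^\beta-|\xi|^\beta-|\eta|^\beta$ is a Coifman--Meyer symbol of the appropriate size on each block; the paraproducts $T_fg,T_gf$ are then handled by the classical $L^\infty\times L^p\to L^p$ paraproduct bound (or its vector-valued analogue), the $L^\infty$ norm arising because the rough factor sits in the low slot, while the resonant term $R(f,g)$ is controlled by first exchanging the order of summation over output frequencies (where $\beta>0$ is used to sum a geometric series) and then applying the Fefferman--Stein maximal inequality together with the Littlewood--Paley square function, $1<p<\infty$. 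I expect this resonant (high--high) interaction to be the main obstacle, since it must be estimated in a square-function norm rather than block by block; for \eqref{LR-L1L2} one must in addition push the three successive H\"older/square-function steps through the non-reflexive mixed norm $L^1_xL^2_T$ rather than an ordinary $L^p$ norm. Everything else is routine once the pointwise identity above and the dyadic framework are in place.
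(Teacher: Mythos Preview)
The paper does not prove Theorem~\ref{leibnitz} at all; it is stated with attribution to \cite{KPV} and used as a black box in Sections~4 and~5. There is therefore no ``paper's own proof'' to compare against.

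Judged on its own, your sketch for \eqref{LR-L1L2} in the strict interior case $\alpha_1,\alpha_2>0$ is precisely the argument in the appendix of \cite{KPV}: the bilinear pointwise identity for the double difference, Cauchy--Schwarz in the kernel variable, H\"older in $t$ and then in $x$, followed by the Stein square-function characterization $\|\mathcal D_{\alpha_i}h\|_{L^{p_i}}\simeq\|D^{\alpha_i}_xh\|_{L^{p_i}}$ in its mixed-norm form. Your remark that the constraint $\tfrac1{q_1}+\tfrac1{q_2}=\tfrac12$ forces $q_i>2$, allowing Minkowski to push the $L^{q_i}_T$-norm inside the $y$-integral, is exactly the mechanism used there. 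In \cite{KPV} the vector-valued square-function bound is obtained by a direct Calder\'on--Zygmund argument in the mixed norm rather than by invoking the UMD property; the latter works too but is heavier machinery than needed.

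For \eqref{commutator1} and the endpoints $\alpha_i=0$ of \eqref{LR-L1L2} your paraproduct plan is the standard one and is correct. One caution about your parenthetical reduction for $s\ge1$: distributing integer derivatives by the classical Leibniz rule produces cross terms like $\partial^j f\,\partial^{k-j}g$, and bounding those by $\|f\|_\infty\|D^s_xg\|_p$ alone genuinely fails (take $f$ high-frequency, $g$ low-frequency). The one-sided estimate \eqref{commutator1} as literally written is in fact only valid for $0<s<1$; for $s\ge1$ one needs either the symmetric right-hand side $\|f\|_\infty\|D^s_xg\|_p+\|g\|_\infty\|D^s_xf\|_p$ or a split $\|D^{s_1}_xf\|_{p_1}\|D^{s_2}_xg\|_{p_2}$ with $s_1,s_2>0$. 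This does not affect the applications in the paper, which only invoke \eqref{commutator1} with fractional exponents in the range where the one-sided bound holds, but you should not try to prove the statement for all $s>0$ in the asymmetric form given.
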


\bigskip

\section{Construction of the initial data}\label{section_ID}
 
In this section, attention is turned to understand dispersive blow-up for each linear equation.
\medskip

Let us divide the analysis in two cases, the linear case of the Schr\"odinger equation and the linear case of the KdV equation: 

\medskip

\subsection{Linear case: Schr\"odinger equation.} We will follow the argument employed in \cite{BoPoSaSp} and \cite{BoSa2} with some modifications. 

\bigskip

Consider the IVP associated to the linear Schr\"odinger equation: \begin{align}\label{lsch}
\begin{cases} 
i\partial_tu+\partial_x^2u=0,
\\ u(x,0)=u_0(x).
\end{cases}
\end{align}
Now, recall that for any $u_0\in L^2(\mathbb{R})$, the unique solution $u$ of \eqref{lsch} has the representation: \[
u(x,t)=\dfrac{1}{(4i\pi t)^{1/2}}\int_{\mathbb{R}}e^{\tfrac{i\vert x-y\vert^2}{4t}}u_0(y)\,dy,
\]
where the integral is taken in the improper Riemann sense. 

Let $u_0$ be defined as
\begin{align}\label{initial_data_schr}
u_0(x):=\dfrac{e^{- i\alpha(x-x_0)^2}}{(1+x^2)^{5/4}}, \qquad \alpha >0.
\end{align}
It is not difficult to show that  $u_0\in H^s(\mathbb{R})$ for any $s\in[0,2)$ but  $u_0\notin H^2(\mathbb{R})$. Moreover, $u_0$ satisfies 
\[
\langle x\rangle ^{2^-}u_0(x)\in L^2(\mathbb{R}). 
\] 
(See \cite{BoPoSaSp, BoSa2} for the details).

In the next lemma we present a precise statement of the dispersive blow-up for the linear Schr\"odinger equation.

\begin{lem}\label{dbu_lsch} 
Let $\alpha\in\mathbb{R}$ and $x_0\in\mathbb{R}$ fixed and let $\varepsilon\in(0,\tfrac{1}{2})$. Consider the point $(x^*,t^*)=(x_0,\tfrac{1}{4\alpha})$ and the initial data \[
u_0(x):=\dfrac{e^{-i\alpha (x-x_0)^2}}{(1+x^2)^\frac{5}{4}}
.
\]
Then, the initial data satisfies: \[ 
u_0\in C^\infty(\mathbb{R})\cap H^{2^-}(\mathbb{R})\cap L^\infty(\mathbb{R}),
\] and the associated global in-time solution $u\in C(\mathbb{R}; H^{2^-}(\mathbb{R}))$ of \eqref{lsch} has the following properties: \begin{enumerate}
\item For any time $t\in\mathbb{R}$ with $t\neq t^*$, the solution $u(\cdot,t)\in C^{1,\frac{1}{2}+\varepsilon}(\mathbb{R})$.
\item At time $t^*$, the solution $u(\cdot,t^*)\in C^{1,\frac{1}{2}+\varepsilon}(\mathbb{R}\setminus\{x^*\})$.
\item At time $t^*$, the solution $u(\cdot,t^*)\notin C^{1,\frac{1}{2}+\varepsilon}(\mathbb{R})$.
\end{enumerate}
\end{lem}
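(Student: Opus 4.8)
The plan is to analyze the explicit oscillatory integral representing $u(x,t)$ and track how regularity is gained away from the focusing point $(x^*,t^*)=(x_0,\tfrac{1}{4\alpha})$ while exactly one derivative-plus-H\"older-$\tfrac12$ worth of regularity is lost at that point. First I would verify the claimed properties of the initial data: $u_0$ is visibly $C^\infty$ and bounded; since $u_0(x)$ decays like $\langle x\rangle^{-5/2}$ one checks $u_0\in L^2$ and, differentiating, $u_0'(x)$ behaves like $\langle x\rangle^{-1/2}$ times an oscillating factor with a non-integrable $L^2$ tail exactly at order $2$, so $u_0\in H^{s}$ for all $s<2$ but $u_0\notin H^2$; likewise $\langle x\rangle^{\beta}u_0\in L^2$ for $\beta<2$, which I will need later when quoting Theorem \ref{MT1}. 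This is routine and I would only sketch it, citing \cite{BoPoSaSp,BoSa2}.

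The core computation is to complete the square in the Schr\"odinger kernel. Writing
\[
u(x,t)=\frac{1}{(4i\pi t)^{1/2}}\int_{\mathbb{R}} e^{\frac{i|x-y|^2}{4t}}\,\frac{e^{-i\alpha(y-x_0)^2}}{(1+y^2)^{5/4}}\,dy,
\]
the quadratic phase in $y$ has coefficient $\tfrac{i}{4t}-i\alpha$, which vanishes precisely when $t=t^*=\tfrac{1}{4\alpha}$. For $t\neq t^*$ this coefficient is nonzero, so completing the square turns the integral (after an allowable contour deformation / Fresnel-type change of variables) into a convolution of a Gaussian-type kernel with the amplitude $\langle y\rangle^{-5/2}$ times a quadratic exponential; the stationary-phase / non-degenerate Gaussian smoothing then shows $u(\cdot,t)$ inherits, and in fact improves on, the regularity of $\langle y\rangle^{-5/2}e^{-i\alpha(y-x_0)^2}$, landing in $C^{1,\frac12+\varepsilon}(\mathbb{R})$ for every $\varepsilon\in(0,\tfrac12)$. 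At the critical time $t=t^*$ the quadratic term disappears entirely and
\[
u(x,t^*)=\frac{e^{\frac{ix^2}{4t^*}}}{(4i\pi t^*)^{1/2}}\int_{\mathbb{R}} e^{-\frac{i x y}{2t^*}}\,e^{i\alpha x_0^2}\,e^{-2i\alpha x_0 y}\cdot\frac{1}{(1+y^2)^{5/4}}\,dy
\]
(after expanding $|x-y|^2$ and $(y-x_0)^2$ and collecting the purely linear-in-$y$ terms), i.e.\ $u(x,t^*)$ equals a unimodular prefactor times a translate/modulation of $\widehat{\langle\cdot\rangle^{-5/2}}$ evaluated at $\xi=\tfrac{x}{2t^*}+2\alpha x_0$; since the Fourier transform of $\langle y\rangle^{-5/2}$ has exactly a $\langle\xi\rangle$-to-the-$3/2$ type decay, i.e.\ it is $C^{1,1/2}$ but no better at its singular point, one reads off that $u(\cdot,t^*)\in C^{1,\frac12+\varepsilon}$ on $\mathbb{R}\setminus\{x^*\}$ but $u(\cdot,t^*)\notin C^{1,\frac12+\varepsilon}(\mathbb{R})$, with the singularity located at $x=x^*=x_0$ where $\xi=0$.

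The main obstacle — and the step I would spend the most care on — is making the two heuristic claims precise and quantitative: (i) away from $t^*$, that the non-degenerate quadratic phase really produces a gain that lands exactly in $C^{1,\frac12+\varepsilon}$ (not more, not less) uniformly on compact time intervals, and (ii) at $t^*$, the exact H\"older modulus of continuity of $\big(\langle\cdot\rangle^{-5/2}\big)^{\widehat{\ }}$ at the origin. For (ii) the clean route is to note $\langle y\rangle^{-5/2}\in H^{2^-}\setminus H^2$ combined with the Sobolev embedding $H^{2}(\mathbb{R})\hookrightarrow C^{1,1/2}(\mathbb{R})$ and a matching lower bound: one shows the function cannot be $C^{1,\frac12+\varepsilon}$ near $x_0$ because that would force $\langle\cdot\rangle^{-5/2}\in H^{2}$ locally in frequency, a contradiction with the precise decay rate of its Fourier transform; I would carry out this pointwise decay estimate for $\widehat{\langle\cdot\rangle^{-5/2}}$ explicitly. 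Everything else — the contour deformation justifying completion of the square for $t\neq t^*$, the continuity $u\in C(\mathbb{R};H^{2^-})$ which is immediate from unitarity of $e^{it\Delta}$ on each $H^{s'}$, $s'<2$ — is standard and I would only indicate it, referring to \cite{BoPoSaSp,BoSa2} for the model computations.
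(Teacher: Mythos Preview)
The paper does not prove this lemma itself --- its entire proof reads ``See \cite{BoPoSaSp} or \cite{BoSa2} for a detailed proof'' --- and your outline is precisely the argument carried out in those references: complete the square in the Schr\"odinger phase, observe that the quadratic-in-$y$ coefficient $\tfrac{1}{4t}-\alpha$ vanishes exactly at $t^*=\tfrac{1}{4\alpha}$ so that $u(\cdot,t^*)$ reduces (up to a smooth unimodular prefactor) to the Fourier transform of $\langle y\rangle^{-5/2}$ evaluated at a linear function of $x-x_0$, and then read off the $C^{1,1/2}$-but-no-better regularity from the $|y|^{-5/2}$ tail. One small wording slip worth fixing: the Fourier transform of $\langle y\rangle^{-5/2}$ actually decays exponentially (Bessel-$K$ representation), so the relevant feature is not a $\langle\xi\rangle^{-3/2}$ \emph{decay} but a $|\xi|^{3/2}$-type \emph{singularity} at the origin, which is exactly what forces $\widehat{\langle\cdot\rangle^{-5/2}}\in C^{1,1/2}\setminus C^{1,1/2+\varepsilon}$ as you correctly conclude.
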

\begin{proof}
See \cite{BoPoSaSp} or \cite{BoSa2} for a detailed proof.
\end{proof}

This concludes the case of the free Schr\"odinger equation. Now the attention is turned to construct the initial data for the linear Korteweg-de Vries equation.

\bigskip

\subsection{Linear case: Korteweg-de Vries equation.} For this case we recall the data constructed in the proof of dispersive blow-up for the KdV equation given in \cite{LiPoSm} (see Section 3).

\medskip

Consider the linear IVP associated to the linear Korteweg-de Vries equation: \begin{align}\label{lkdv}
\begin{cases}
\partial_t v+\partial_x^3v=0, \qquad x\in\mathbb{R},\,t>0,
\\ v(x,0)=v_0(x),
\end{cases}
\end{align}
whose solution is given by \[
v(x,t)=V(t)v_0(x)=e^{-t\partial_x^3}v_0=S_t*v_0(x),
\]
where, \[
S_t(x):=\dfrac{1}{3\sqrt{3}t}A_i\left(\dfrac{x}{3\sqrt{3}t}\right),
\]
and $A_i(\cdot)$ denotes the Airy function. The following lemma give us the detailed statement for the dispersive blow-up for the initial-value problem associated to the linear KdV equation \eqref{lkdv}. 
\begin{lem}[\cite{LiPoSm}]\label{dbu_lkdv} Let $\alpha\in\R$ fixed and consider the initial data \[
v_0(x):= \sum_{j=1}^\infty \lambda_jV(-\alpha j)\phi(x), \quad \lambda_j>0,
\]
where $\lambda_j=c e^{-\alpha^2j^2}$ with $c>0$ small enough and $\phi(x):=e^{-2\vert x\vert}$. Then, \[ 
v_0\in C^\infty(\mathbb{R})\cap H^{3/2^-}(\mathbb{R})\cap L^2(\langle x\rangle ^{3/2^-}dx)\cap L^\infty(\mathbb{R}),
\] and the associated global in-time solution $v\in C(\mathbb{R}; H^{3/2^-}(\mathbb{R}))$ of \eqref{lkdv} has the following properties: \begin{enumerate}
\item For any $t>0$ with $t\notin \alpha\mathbb{Z}$, we have $v(\cdot,t)\in C^1(\mathbb{R})$.
\item For any $t\in\alpha\mathbb{N}$ we have $v(\cdot,t)\notin C^1(\mathbb{R})$.
\end{enumerate}
\end{lem}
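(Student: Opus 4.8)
The plan is to follow the construction in \cite{LiPoSm} and verify the three assertions (regularity/decay of $v_0$, $C^1$-smoothness away from the exceptional times, and failure of $C^1$ at the exceptional times) by reducing everything to properties of the single profile $\phi(x)=e^{-2|x|}$ and the linear flow $V(t)$.

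First I would record the relevant facts about the building block. Since $\phi(x)=e^{-2|x|}$, one computes $\widehat\phi(\xi)=\tfrac{4}{4+\xi^2}$, so $\phi\in H^{3/2^-}(\mathbb{R})$ but $\phi\notin H^{3/2}(\mathbb{R})$ (the borderline being dictated by the jump of $\phi'$ at $x=0$), and $\langle x\rangle^{m}\phi\in L^2$ for every $m$, in particular $\phi\in L^2(\langle x\rangle^{3/2^-}dx)\cap L^\infty$. Because $V(t)$ is a unitary group on every $H^s$ and each $V(-\alpha j)\phi$ is smooth for $j\ge 1$ (local smoothing / the Airy kernel is $C^\infty$ away from the singular time, and here the singular time of $V(-\alpha j)\phi$ is $t=\alpha j\ne 0$), each summand lies in $C^\infty(\mathbb{R})\cap H^{3/2^-}$. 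The weights $\lambda_j=ce^{-\alpha^2 j^2}$ decay super-exponentially, so one would show that $\sum_j\lambda_j\|V(-\alpha j)\phi\|_{H^{3/2-\delta}}$, $\sum_j\lambda_j\|\langle x\rangle^{3/2-\delta}V(-\alpha j)\phi\|_2$ (here one uses the weighted Airy formula \eqref{kdv_persistencia}--\eqref{pers_kdv2} to control the growth of the weighted norm by at most a polynomial factor $(1+\alpha j)$), and $\sum_j\lambda_j\|V(-\alpha j)\phi\|_{C^k}$ are all finite for every $k$; hence the series defining $v_0$ converges in $C^\infty\cap H^{3/2^-}\cap L^2(\langle x\rangle^{3/2^-}dx)\cap L^\infty$, giving the first claim and $v\in C(\mathbb{R};H^{3/2^-}(\mathbb{R}))$.

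Next, by linearity and the group property,
\begin{equation*}
v(\cdot,t)=V(t)v_0=\sum_{j=1}^\infty\lambda_j\,V(t-\alpha j)\phi.
\end{equation*}
For the smoothing-away part, suppose $t>0$ with $t\notin\alpha\mathbb{Z}$. Then $t-\alpha j\ne0$ for every $j\ge1$, so each term $V(t-\alpha j)\phi$ is $C^\infty$; the only danger is in summing. I would split the sum at $j\sim t/\alpha$: for $j$ far from $t/\alpha$ the time $|t-\alpha j|$ is bounded below and one gets $C^1$ bounds on $V(t-\alpha j)\phi$ growing only polynomially in $|t-\alpha j|$ (from the dispersive decay of the Airy kernel, $\|S_\tau\|_{\text{appropriate}}\lesssim |\tau|^{-a}$, or equivalently from local smoothing estimates), which is killed by $\lambda_j$; for the finitely many $j$ near $t/\alpha$ one uses that $t-\alpha j$ is a fixed nonzero number and each corresponding term is individually $C^1$. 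Hence $v(\cdot,t)\in C^1(\mathbb{R})$, proving (1).

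For the blow-up part (2), fix $t=\alpha n$ with $n\in\mathbb{N}$. Isolate the single term $j=n$: $v(\cdot,\alpha n)=\lambda_n\,V(0)\phi+\sum_{j\ne n}\lambda_j V(\alpha(n-j))\phi=\lambda_n\phi+(\text{a }C^1\text{ function})$, where the remaining sum is $C^1$ by exactly the argument used for (1), since now $n-j\ne0$ for all $j\ne n$. Since $\phi(x)=e^{-2|x|}$ is not $C^1$ at $x=0$ (its derivative jumps by $-4$ there) and $\lambda_n>0$, the sum $\lambda_n\phi+C^1$ cannot be $C^1$; that is, $v(\cdot,\alpha n)\notin C^1(\mathbb{R})$, with the singularity located at $x=0$. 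This is the crux of the argument, and the main technical obstacle is precisely making the ``remaining sum is $C^1$'' step rigorous uniformly: one must quantify how the $C^1$ (and higher) norms of $V(\tau)\phi$ grow as $|\tau|\to\infty$ and as $|\tau|\to0$, and check that the super-exponential decay of $\lambda_j$ dominates in both regimes; all the needed quantitative input is available from the dispersive and local-smoothing estimates for $V(t)$ recalled in Section 2 together with the weighted formula \eqref{kdv_persistencia}, and the choice $c>0$ small merely normalizes constants. The only genuinely delicate point is the small-$|\tau|$ behaviour, handled by noting that for $|\tau|$ bounded the relevant norms of $V(\tau)\phi$ are bounded (away from $\tau=0$) and the number of such indices $j$ is finite, so no uniformity issue actually arises.
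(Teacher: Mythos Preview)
Your proposal is correct and follows precisely the construction and argument of \cite{LiPoSm}, which is exactly what the paper does here: the paper's own ``proof'' consists solely of the sentence ``For a detailed proof of this statement see \cite{LiPoSm}, section 3.'' Your sketch in fact supplies more detail than the paper, and the key mechanism you isolate---writing $v(\cdot,\alpha n)=\lambda_n\phi+\sum_{j\ne n}\lambda_jV(\alpha(n-j))\phi$ and observing that the tail is $C^1$ while $\phi=e^{-2|x|}$ is not---is the heart of the argument in the cited reference.
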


\begin{proof} For a detailed proof of this statement see \cite{LiPoSm}, section 3.
\end{proof}

\bigskip

\section{Proof of Theorem \ref{MT1}}

In this section we show the persistence property in weighted spaces of solutions of the IVP \eqref{IVP}.

\begin{proof}[Proof of Theorem \ref{MT1}]

The idea of the proof is to apply the contraction principle to the system of integral equations equivalent to \eqref{IVP}, that is,
\begin{equation}\label{I-S-KdV}
\begin{split}
\Phi(u) &= S(t)u_0 +\int_0^t S(t-t') (uv)(t')\,dt' +\int_0^t S(t-t') |u|^2u(t')\,dt',\\
\Psi(v) &= V(t) v_0 -\int_0^t V(t-t') v\p_xv(t')\,dt' + \int_0^t V(t-t') \p_x(|u|^2)(t')\, dt',
\end{split}
\end{equation}
where $\{S(t)\}$ and $\{V(t)\}$ are the unitary groups associated to the linear Schr\"odinger and the Airy equation respectively.
We will give a sketch of the proof.

For $u:\R\times[0,T]\to \C$ and $v:\R\times[0,T]\to \R$ with $T$ fixed and $s>3/4$, define
\begin{equation}\label{spaces-1}
\mu_1^T(u):= \|u\|_{L^{\infty}H^{s+1/2}}+\|D_x^sD^{1/2}_x u\|_{L^{\infty}_xL^2_T}+\|u\|_{L^2_xL^{\infty}_T}+\|\p_x u\|_{L^4_TL^{\infty}_x},
\end{equation}
\begin{equation}\label{spaces-2}
\begin{split}
\mu_2^T(v):= &\|v\|_{L^{\infty}H^{s}}+\|D_x^s\p_xv\|_{L^{\infty}_xL^2_T}
+\|D_x^{s-1/2}\p_xv\|_{L^{\infty}_xL^2_T}\\ &+\|v\|_{L^2_xL^{\infty}_T}
+\|\p_x v\|_{L^4_TL^{\infty}_x}.
\end{split}
\end{equation}

By using the definition, group properties, Minkowski's inequality, and Sobolev spaces properties we have
\begin{equation}\label{lwp-1}
\begin{split}
&\|D^{s+\frac12}_x \Phi(u)\|_{L^2}\\ &\le c\|D^{s+\frac12}_x u_0\|_{L^2}+\int_0^T \|D^{s+\frac12}_x (uv)\|_{L^2}\,dt'+\int_0^T \|D^{s+\frac12}_x (|u|^2u)\|_{L^2}\,dt'\\
&\le c\,\|u_0\|_{s+1/2} +\int_0^T \|D^{s-\frac12}_x \p_x(uv)\|_{L^2}\,dt'+ c\,T\,\underset{[0,T]}{\sup}\|u(t)\|_{s+1/2}^3.
\end{split}
\end{equation}

To complete the estimate we use the commutator estimate \eqref{commutator1}, Sobolev spaces properties, the Cauchy-Schwarz inequality, and H\"older's inequality
in time to led to
\begin{equation}\label{lwp-2}
\begin{split}
&\int_0^T \|D^{s-\frac12}_x \p_x(uv)\|_{L^2}\,dt \\
&\le c\,\int_0^T\|D^{s-1/2}_x(u\p_xv)\|_{L^2}\,dt+c\,\int_0^T\|D^{s-1/2}_x(\p_xu v)\|_{L^2}\,dt
\\ &  \le c\,\int_0^T \|\p_x v\|_{L^{\infty}_x}\|D^{s-1/2}_xu\|_{L^2}\,dt +c\,\int_0^T\| u D^{s-1/2}_x\p_x v\|_{L^2}
\\ &  \quad +c\,\int_0^T \|\p_x u\|_{L^{\infty}_x}\|D^{s-1/2}_xv\|_{L^2}\,dt +c\,\int_0^T\| v D^{s-1/2}_x\p_x u\|_{L^2}
\\ &  \le cT^{3/4}\,\|\p_x v\|_{L^4_TL^{\infty}_x}\underset{[0,T]}{\sup} \|u(t)\|_{s+1/2}+cT^{1/2} \|u\|_{L^2_xL^{\infty}_T}\|D^{s-1/2}_x\p_xv\|_{L^{\infty}_xL^2_T}
\\ &    \quad  + cT^{3/4}\,\|\p_x u\|_{L^4_TL^{\infty}_x}\underset{[0,T]}{\sup} \|v(t)\|_{s+1/2}+cT^{1/2} \|v\|_{L^2_xL^{\infty}_T}\|D^{s-1/2}_x\p_xu\|_{L^{\infty}_xL^2_T}.
\end{split}
\end{equation}

Combining \eqref{lwp-1} and \eqref{lwp-2} it follows that
\begin{equation}\label{lwp-2b}
\|D^{s+\frac12}_x \Phi(u)\|_{L^2}\le c\|u_0\|_{s+1/2}+ cT^{1/2}\big((1+T^{1/4})\mu_1^T(u)\mu_2^T(v)+ T^{1/2}(\mu_1^T(u))^3\big).
\end{equation}

Next we estimate the $H^s$-norm of $\Psi(v)$. It is enough to estimate $\|D^s_x\Psi(v)\|_{L^2}$.  To do so, we use group properties and Minkowskii's inequality
to obtain
\begin{equation}\label{lwp-3}
\begin{split}
\|D^{s}_x \Psi(v)\|_{L^2} &\le c\|D^{s}_x v_0\|_{L^2}+\int_0^T \|D^s_x (v\p_xv)\|_{L^2}\,dt'+\int_0^T \|D^{s}_x \p_x(u\bar{u})\|_{L^2}\,dt'\\
&\le c\,\|v_0\|_{s} +\int_0^T \|D^{s}_x (v\p_xv)\|_{L^2}\,dt'+ \int_0^T \|D^{s}_x (\bar{u}\p_x u)\|_{L^2}\,dt'\\
&\;\;\;+\int_0^T \|D^{s}_x (u\p_x\bar{u})\|_{L^2}\,dt'.
\end{split}
\end{equation}

The commutator estimates \eqref{commutator1} and Holder's inequality yield 
\begin{equation}\label{lwp-4}
\begin{split}
\int_0^T \|D^s_x (v\p_xv)\|_{L^2}\,dt &\le cT^{3/4}\,\|\p_x v\|_{L^4_TL^{\infty}_x}\underset{[0,T]}{\sup} \|v(t)\|_{s}\\
&\quad+cT^{1/2} \|v\|_{L^2_xL^{\infty}_T}\|D^s_x\p_xv\|_{L^{\infty}_xL^2_T}.
\end{split}
\end{equation}
Similarly, we get
\begin{equation}\label{lwp-5}
\begin{split}
& \int_0^T  \|D^{s}_x (\bar{u}\p_x u)\|_{L^2}\,dt'+\int_0^T \|D^{s}_x (u\p_x\bar{u})\|_{L^2}\,dt'\\
&  \qquad  \qquad \le cT^{3/4}\,\|\p_x u\|_{L^4_TL^{\infty}_x}\underset{[0,T]}{\sup} \|u(t)\|_{s}+cT^{1/2} \|u\|_{L^2_xL^{\infty}_T}
 \|D^s_x\p_xu\|_{L^{\infty}_xL^2_T}.
\end{split}
\end{equation}

Using the definition \eqref{spaces-2} and the inequalities \eqref{lwp-3}, \eqref{lwp-4} and \eqref{lwp-5} we deduce that
\begin{equation}\label{lwp-5b}
\|D^{s}_x \Psi(v)\|_{L^2} \le c\|v_0\|_s+ c T^{1/2} (1+T^{1/4})\big((\mu_2^T(v))^2 +(\mu_1^T(u))^2\big).
\end{equation}

On the other hand, use of Kato's smoothing effect \eqref{kato-s-schr} and the analysis in \eqref{lwp-1} and \eqref{lwp-2} yield
\begin{equation}\label{lwp-6}
\begin{split}
&\|D^s_x\p_x \Phi(u)\|_{L^{\infty}_xL^2_T} \\
&\le c\|u_0\|_{s+1/2} + cT^{1/2}\big((1+T^{1/4})\mu_1^T(u)\mu_2^T(v)+ T^{1/2}(\mu_1^T(u))^3\big).
\end{split}
\end{equation} 

Same argument as above, now applying Kato's smoothing effect \eqref{kato-s-kdv} and the arguments in \eqref{lwp-3}, \eqref{lwp-4} and
\eqref{lwp-5} lead to
\begin{equation}\label{lwp-7}
\begin{split}
\|D^s_x\p_x \Psi(v)\|_{L^{\infty}_xL^2_T} &\le c\|v_0\|_{s} +  c T^{1/2} (1+T^{1/4})\big((\mu_2^T(v))^2 +(\mu_1^T(u))^2\big).
\end{split}
\end{equation} 

From the maximal norm estimates \eqref{sch-mfn} and \eqref{kdv-mfn} combined with the arguments in \eqref{lwp-1}, \eqref{lwp-2}
and \eqref{lwp-3}, \eqref{lwp-4}, \eqref{lwp-5} it follows that 
\begin{equation}\label{lwp-8}
\begin{split}
\|\Phi (u)\|_{L^2_xL^{\infty}_T} &+\|\Psi (v)\|_{L^2_xL^{\infty}_T}\\
&\le c\, (1+T)^{\rho_1}\|u_0\|_{s+1/2} +c\, (1+T)^{\rho_2}\|v_0\|_{s} \\
&\quad+ cT^{1/2}\,(1+T)^{\rho_1}\,\big((1+T^{1/4})\mu_1^T(u)\mu_2^T(v)+ T^{1/2}(\mu_1^T(u))^3\big)\\
&\quad + c T^{1/2}(1+T)^{\rho_2} (1+T^{1/4})\big((\mu_2^T(v))^2 +(\mu_1^T(u))^2\big).
\end{split}
\end{equation}

The Strichartz estimates \eqref{str-sch} and \eqref{str-kdv} together with the analysis in \eqref{lwp-1}, \eqref{lwp-2}
and \eqref{lwp-3}, \eqref{lwp-4}, \eqref{lwp-5} lead to 
\begin{equation}\label{lwp-10}
\begin{split}
\|\p_x\Phi (u)\|_{L^4_TL^{\infty}_x} &+\|\p_x\Psi (v)\|_{L^4_TL^{\infty}_x}
 \le c\, (\|u_0\|_{s+1/2} + \|v_0\|_{s}) \\
&\hskip10pt + cT^{1/2}\big((1+T^{1/4})\mu_1^T(u)\mu_2^T(v)+ T^{1/2}(\mu_1^T(u))^3\big)\\
&\hskip10pt +  cT^{1/2}(1+T^{1/4})\big((\mu_2^T(v))^2 +(\mu_1^T(u))^2\big).
\end{split}
\end{equation}

Combining \eqref{lwp-2}, \eqref{lwp-5b}, \eqref{lwp-6}, \eqref{lwp-7}, \eqref{lwp-8}, \eqref{lwp-10}, and the definitions \eqref{spaces-1} and 
 \eqref{spaces-2}, we have
\begin{equation}\label{contraction}
\begin{split}
\tres(\Phi(u),\Psi(v))\tres&=\mu_1^T(\Phi(u))+\mu_2^T(\Psi(v))\\
&\le c\,(\|u_0\|_{s+1/2}+\|v_0\|_s) \\
&\hskip10pt + cT^{1/2}\big(\mu_1^T(u)\mu_2^T(v)+ (\mu_1^T(u))^3\big)\\
&\hskip10pt +  cT^{1/2}\big((\mu_2^T(v))^2 +(\mu_1^T(u))^2\big)\\
&\hskip10pt +  cT^{1/2}\big (\tres (u,v)\tres^2+\tres (u,v)\tres^3\big).
\end{split}
\end{equation}
for $0<T<1$. Choosing  $a\le 2c\,(\|u_0\|_{s+1/2}+\|v_0\|_s)$ and $T$ such that
\begin{equation}\label{time-contract}
cT^{1/2}\big (\tres (u,v)\tres+\tres (u,v)\tres^2\big) <\frac12
\end{equation}
we can show that the map $(\Phi (u), \Psi(v))$ applies the ball 
$$
X^T_a=\{ (u,v)\in C([0,{T_0}]; H^{s+1/2})\times C([0, T]; H^{s}) \;: \:\tres(\Phi(u),\Psi(v))\tres\le a\} 
$$ 
into itself.

The same argument described above show that $(\Phi(u),\Psi(v))$ is a contraction in $X^T_a$ and so there is a unique
solution of the IVP \eqref{IVP}
\begin{equation}
(u,v)\in C([0,T]; H^{s+1/2}(\R))\times C([0, T]; H^{s}(\R)).
\end{equation}

By uniqueness the previous argument gives us a solution $(u(t), v(t))$ defined by the class \eqref{spaces-1}-\eqref{spaces-2} of the
integral equations.
\begin{equation}\label{solution}
\begin{split}
u(t) &= S(t)u_0 +\int_0^t S(t-t') (uv)(t')\,dt' +\int_0^t S(t-t') |u|^2u(t')\,dt',\\
v(t) &= V(t) v_0 -\int_0^t V(t-t') v\p_xv(t')\,dt' + \int_0^t V(t-t') \p_x(|u|^2)(t')\, dt'.
\end{split}
\end{equation}

\medskip

Next we prove the persistence property in weighted spaces. For simplicity we will take $s=\frac34+$ in the following.

We consider
\begin{equation}
u_0\in H^{\frac54+}(\R)\cup L^2(|x|^{\frac{5}{2}^{+}}dx) \text{\hskip10pt and\hskip10pt}  v_0\in H^{\frac34+}(\R)\cup L^2(|x|^{\frac{3}{4}^{+}}dx)
\end{equation}
and introduce the notation
\begin{equation}
\begin{split}
\mu_3^{T_0}(u)&=\mu_1^{T_0} (u)+\underset{[0,{T_0}]}{\sup} \||x|^{\frac{5}{4}^{+}}u(t)\|_{L^2}\\
\mu_4^{T_0}(v) &=\mu_2^{T_0} (v)+\underset{[0,{T_0}]}{\sup} \||x|^{\frac{3}{8}^{+}}v(t)\|_{L^2}
\end{split}
\end{equation}
for some $T_0\in (0, T)$ to be determined below.

\medskip

Thus, applying formula \eqref{lwp-12} to $u$ in \eqref{solution}, we have
\begin{equation}\label{lwp-14}
\begin{split}
&\| |x|^{\frac54^{+}} (u)(t)\|_{L^2} \\
&\le  c  \| |x|^{\frac54^{+}} S(t)u_0\|_{L^2}+\| |x|^{\frac54^{+}} S(t)\int_0^t S(t')(uv+|u|^2u)(t')\,dt'\|_{L^2} \\
&\le  \||x|^{\frac54^{+}}u_0\|_{L^2}+ c(1+|t|)\,(\|u_0\|_{L^2}+\|D^{\frac54^{+}}_x u_0\|_{L^2})\\
&\;\;\; + c \int_0^{T_0} \||x|^{\frac54^{+}}(uv+|u|^2u)(t)\|_{L^2}\,dt \\
&\;\;\;+ c\,(1+{T_0})\int_0^{T_0} \|(uv+|u|^2u)(t)\|_{L^2}\,dt\\
&\;\;\;+c \,(1+{T_0})  \int_0^{T_0} \| D^{\frac54^{+}}_x(uv+|u|^2u)(t)\|_{L^2}\,dt\\
&\le \||x|^{\frac54^{+}}u_0\|_{L^2}+ c(1+{T_0})\|u_0\|_{\frac{5}{4}^{+}}+ A_1+A_2+A_3.
\end{split}
\end{equation}

Next we estimate $A_i$, $i=1,2,3$.  Holder's inequality and Sobolev lemma lead to
\begin{equation}\label{lwp-15}
\begin{split}
A_1 &\le \int_0^{{T_0}} (\| |x|^{\frac54^{+}}u v(t)\|_{L^2} +\||x|^{\frac54^{+}}u|u|^2(t)\|_{L^2})\,dt\\
&\le c\,{T_0} \big( \| |x|^{\frac{5}{4}^{+}}u\|_{L^{\infty}_{T_0}L^2_x} \underset{[0,{T_0}]}{\sup}\|v(t)\|_{\frac34^{+}}
+\| |x|^{\frac{5}{4}^{+}}u(t)\|_{L^{\infty}_{T_0}L^2_x} \underset{[0,{T_0}]}{\sup}\|u(t)\|_{\frac54^{+}}^2\big) \\
&\le cT_0\,\big(\mu_3^{T_0}(u)\mu_2^{T_0}(v)+ \mu_3^{T_0}(u)(\mu_1^{T_0}(u))^2\big).
\end{split}
\end{equation}

Holder's inequality and Sobolev lemma yield
\begin{equation}\label{lwp-16}
\begin{split}
A_2 &\le  c\,(1+{T_0})\,{T_0} \big( \underset{[0,{T_0}]}{\sup}\|u(t)\|_{\frac54^{+}} \underset{[0,{T_0}]}{\sup}\| v(t)\|_{\frac34^{+}} + \underset{[0,{T_0}]}{\sup}\|u(t)\|_{\frac54^{+}}^3 \big)\\
&\le  c\,(1+{T_0})\,{T_0} \big(\mu_1^{T_0}(u)\mu_2^{T_0}(v)+(\mu_1^{T_0}(u))^3\big).
\end{split}
\end{equation}

Applying Sobolev spaces properties we obtain
\begin{equation}\label{lwp-17}
\begin{split}
A_3 &\le  c\,(1+{T_0})\,\int_0^{T_0} \big(\|D^{\frac58^{+}}_x (uv)(t)\|_{L^2} + \|D^{\frac58^{+}}_x (u|u|^2)(t)\|_{L^2} \big)\,dt\\
&\le c\,(1+{T_0})\,{T_0} \big( \underset{[0,{T_0}]}{\sup}\|u(t)\|_{{\frac54}^{+}} \underset{[0,{T_0}]}{\sup}\| v(t)\|_{{\frac34}^{+}} + \underset{[0,{T_0}]}{\sup}\|u(t)\|_{{\frac54}^{+}}^3 \big)\\
&\le  c\,(1+{T_0})\,{T_0} \big(\mu_1^{T_0}(u)\mu_2^{T_0}(v)+(\mu_1^{T_0}(u))^3\big).
\end{split}
\end{equation}

%%%%%%%%%%%%%%%%%%%%%%%%%%%%%%%%%%%%%%%%%%%%%%%%%%%%%%%%%%%%%%%%%%%%%%%%
%%%%%%%%%%%%%%%%%%%%%%%%%%%%%%%%%%%%%%%%%%%%%%%%%%%%%%%%%%%%%%%%%%%%%%%%

Now we estimate $\| |x|^{\frac38^{+}} v(t)\|_{L^2}$. Applying formula \eqref{kdv_persistencia} to $v$ in \eqref{solution} we get
\begin{equation}\label{lwp-18}
\begin{split}
\| |x|^{\frac38^{+}} v(t)\|_{L^2} &\le \| |x|^{\frac38^{+}}v_0\|_{L^2} +c(1+{T_0})\,\big(\|v_0\|_{L^2}+\|D^{\frac34^{+}}_x v_0\|_{L^2}\big)\\
&\;\;\;+  \|\int_0^t V(t-t')|x|^{\frac38^{+}} v\partial_x v \,dt' \|_{L^2}\\
&\hskip10pt + \|\int_0^t V(t-t')|x|^{\frac38^{+}}\partial_x |u|^2\,dt'\|_{L^2}\\
&\;\;\;+c\,(1+{T_0})\,\int_0^{T_0}\big(\| v\p_x v\|_{L^2}+\| \p_x |u|^2\|_{L^2}\big)\,dt\\
&\;\;\;+c\,(1+{T_0})\,\int_0^{T_0}\|D^{\frac34^{+}}_x (v\p_xv)\|_{L^2}\,dt \\
&\;\;\;+c\,(1+{T_0})\,\int_0^{T_0} \|D^{\frac34^{+}}_x \p_x(|u|^2)\|_{L^2}\,dt.
\end{split}
\end{equation}

The last three terms above were previously estimated. We only need to bound the third and fourth term on the
right hand side of \eqref{lwp-18}. 

\medskip

Minkowski's inequality, group properties and H\"older's inequality yield
\begin{equation}\label{lwp-19}
\begin{split}
\|\int_0^t V(t-t') |x|^{\frac38^{+}}\,v\partial_x v(t')dt'\|_{L^2}
&\le \int_0^{T_0} \| |x|^{\frac38^{+}}v\|_{L^2}\|\p_xv\|_{L^{\infty}_x}\,dt\\
&\le cT^{3/4}_0 \sup_{[0,T_0]} \| |x|^{\frac38^{+}}v(t)\|_{L^2}\|\p_xv\|_{L^4_{T_0}L^{\infty}_x}\\
&\le cT^{3/4}_0 \mu_4^{T_0}(v) \mu_2^{T_0}(v).
\end{split}
\end{equation}

Similarly, we obtain
\begin{equation}\label{lwp-20}
\begin{split}
\|\int_0^t &V(t-t') |x|^{\frac38^{+}}\p_x(|u|^2)(t')dt'\|_{L^2_x} \\
&\le \int_0^{T_0} (\||x|^{\frac38^{+}}\bar{u}\|_{L^2_x} \|\p_x u\|_{L^{\infty}}+ \||x|^{\frac38^{+}}u\|_{L^2_x}\|\p_x \bar{u}\|_{L^{\infty}_x})\,dt\\
&\le \int_0^{T_0} (\|\ji x\jd^{\frac54^{+}}\bar{u}\| \|\p_x u\|_{L^{\infty}}+ \|\ji x\jd^{\frac54^{+}}u\|_{L^2_x}\|\p_x \bar{u}\|_{L^{\infty}_x})\,dt\\
&\le c T^{3/4}_0\sup_{[0,T_0]} \|\ji x\jd^{\frac54^{+}}u(t)\|_{L^2} \|\p_x u\|_{L^4_TL^{\infty}_x}\\
&\le cT^{3/4}_0 \mu_3^{T_0}(u) \mu_1^{T_0}(u).
\end{split}
\end{equation}

Gathering the information in \eqref{lwp-14}-\eqref{lwp-17} we get that
\begin{equation}\label{sol-wu}
\begin{split}
 \||x|^{\frac54^{+}}u(t)\|_{L^2}&\le  c\||x|^{\frac54^{+}}u_0\|_{L^2}+ c(1+T_0)\,\|u_0\|_{\frac54^{+}}\\
& \hskip10pt +cT_0\,\big(\mu_3^{T_0}(u)\mu_2^{T_0}(v)+ \mu_3^{T_0}(u)(\mu_1^{T_0}(u))^2\big)\\
& \hskip10pt  + c\,(1+{T_0})\,{T_0} \big(\mu_1^{T_0}(u)\mu_2^{T_0}(v)+(\mu_1^{T_0}(u))^3\big).
\end{split}
\end{equation}

On the other hand,  from \eqref{lwp-18}-\eqref{lwp-20} we deduce that
\begin{equation}\label{sol-wv}
\begin{split}
\| |x|^{\frac38^{+}} v(t)\|_{L^2} &\le c\| |x|^{\frac38^{+}}v_0\|_{L^2} +c(1+{T_0})\,\|v_0\|_{3/4^{+}}\\
& \hskip10pt + cT^{3/4}_0 \big(\mu_4^{T_0}(v) \mu_2^{T_0}(v)+ \mu_3^{T_0}(u) \mu_1^{T_0}(u)\big)\\
& \hskip 10pt  +cT^{1/2}_0\big((\mu_2^T(v))^2 +(\mu_1^T(u))^2\big).
\end{split}
\end{equation}

Taking $T_0\in (0,T)$ such that \eqref{time-contract} holds we obtain
\begin{equation}\label{sol-weighted}
\begin{split}
 \||x|^{\frac54^{+}}u(t)\|_{L^2}+\| |x|^{\frac38^{+}} v(t)\|_{L^2}
 &\le  c\||x|^{\frac54^{+}}u_0\|_{L^2}+ c(1+T_0)\,\|u_0\|_{\frac54^{+}}\\
 & \hskip10pt +c\| |x|^{\frac38^{+}}v_0\|_{L^2} +c(1+T_0)\,\|v_0\|_{\frac34^{+}}\\
& \hskip10pt +2c(1+T_0) T_0^{1/2} (\|u_0\|_{s+1/2}+\|v_0\|_s)\\
&  \hskip10pt+ 2c\, T^{1/2}_0(\|u_0\|_{\frac54^{+}}+\|v_0\|_{\frac34^{+}}).
\end{split}
\end{equation}
This  basically completes the proof.
\end{proof}

%{\color{red} \begin{rem}
%The property for the local existence time: \[
%T=(\Vert u_0\Vert_{s+\frac{1}{2}}+\Vert v_0\Vert_s)\to+\infty \quad \text{as}\quad \Vert u_0\Vert_{s+\frac{1}{2}}+\Vert v_0\Vert_s \to 0,
%\]
%follows from the previous fixed point argument: Let us consider the norm $\tres(u,v)\tres_T$ associated to \eqref{mteq1}-\eqref{mteq2} and consider the ball of radius \[
%a=C\big(\Vert u_0\Vert_{s+\frac{1}{2}}+\Vert v_0\Vert_s\big),
%\]
%with $C>0$ large enough. Then, the previous inequalities \eqref{lwp-1}-\eqref{lwp-28} impose a set of restrictions for $T$ of the form: \[
%T\lesssim a^{-\delta} \quad \text{for certain} \quad \delta>0.
%\]
%Thus, we can always define $T$ such that $T\to+\infty$ as the norm of the initial data goes to zero. 
%\end{rem}
%}

%%%%%%%%%%%%%%%%%%%%%%%%%%%%%%%%%%%%%%%%%%%%%%%%%%%%%%%%%%%%%%%%%%%%%%%

\bigskip

\section{Proof of the Main Theorem \ref{MT2}}

\medskip

The following proof is built upon the linear analysis appearing in Section \ref{section_ID}.

\medskip

Consider the IVP \eqref{IVP} associated to the Schr\"odinger-Korteweg-de Vries system, 
with initial data
\[
u_0\in C^\infty(\mathbb{R})\cap H^{2^-}(\mathbb{R})\cap L^2(\langle x\rangle ^{4^-}dx)\cap L^\infty(\mathbb{R}),
\]
and \[
v_0\in C^{\infty}(\mathbb{R})\cap H^{3/2^-}(\mathbb{R})\cap L^2(\langle x\rangle^{3/2^-}dx)\cap L^\infty (\mathbb{R}),
\] 
constructed in Lemmas \ref{dbu_lsch} and Lemma \ref{dbu_lkdv} respectively, choosing both parameters such that they develop dispersive blow-up for the linear equations at the same time $t^*$ small enough. %Note that the same is true for initial data of the form $\eta(u_0,v_0)$, for any $\eta>0$. Thus, in view of Theorem \ref{MT1}, the local existence time $T=T(\Vert \eta u_0\Vert_{H^{s+\frac{1}{2}}}+\Vert\eta v_0\Vert_{H^s})$ can be made arbitrarily large by choosing $\eta$ sufficiently small and hence we can always achieve $T>1=t^*$.

\medskip

As we proved in the previous section we have a solution of the IVP \eqref{IVP} given by
\begin{align}\label{u_sol}
u(x,t)=S(t)u_0+\int_0^t S(t-t')(uv)(t')dt'+\int_0^tS(t-t')(\vert u\vert^2u)(t')dt',
\end{align}
and 
\begin{equation}\label{v_sol}
\begin{split}
v(x,t) &=V(t)v_0+\int_0^tV(t-t')\partial_x(v^2)(t')dt'
+\int_0^tV(t-t')(\partial_x\vert u\vert^2)(t')dt'.
\end{split}
\end{equation}
If the integral terms in \eqref{u_sol} and \eqref{v_sol} are $C^{1,\frac{1}{2}+\varepsilon}_x(\mathbb{R})$ and $C^1_x(\mathbb{R})$ functions for all $t\in[0,T]$ respectively, then the desired result will follow from what we already known about $S(t)u_0$ and $V(t)v_0$ in Section \ref{section_ID}. To do this we divide the analysis in two cases, the inhomogeneous terms at Schr\"odinger equation level and the other ones at KdV equation level. 

\medskip 

The following two lemmas are sufficient to complete the proof of Theorem \ref{MT2}.

\begin{lem}\label{DBU_Schr}
Let $s>3/4 $ and consider an initial data \[
(u_0,v_0)\in H^{s+\frac{1}{2}}(\mathbb{R})\cap L^2(\vert x\vert^{2s+1}dx)\times H^s(\mathbb{R})\cap L^2(\vert x\vert^{s}dx).
\]
 Let $(u(t),v(t))$ be the corresponding solution for the IVP \eqref{IVP} given by Theorem \ref{MT1},
\begin{align*}
u(x,t)&=S(t)u_0+\int_0^t S(t-t')(uv)(t')dt'+\int_0^tS(t-t')(\vert u\vert^2u)(t')dt'
\\ &=:S(t)u_0+\mathrm{I}(x,t),
\end{align*}
then $\mathrm{I}\in C([0,T];H^{s+\frac{3}{4}}(\mathbb{R}))$.
\end{lem}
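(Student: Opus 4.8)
\textbf{Proof plan for Lemma \ref{DBU_Schr}.}

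The plan is to show that the Duhamel term $\mathrm I(x,t)$ gains $1/4$ derivative relative to the data regularity: although $u_0\in H^{s+1/2}$ only, we will place $\mathrm I(\cdot,t)$ in $H^{s+3/4}$. The mechanism is the extra smoothing coming from the $L^2(|x|^{2r_1}dx)$ information carried by the solution via Theorem \ref{MT1}, exactly as in \cite{LiPoSm}. Concretely, I would estimate $\|D^{s+3/4}_x\mathrm I(t)\|_{L^2_x}$ by splitting each inhomogeneity ($uv$ and $|u|^2u$) and distributing derivatives with the fractional Leibnitz rule \eqref{commutator1}. The worst term is the one where all $s+3/4$ derivatives fall on the lowest-regularity factor $v$ (which lives only in $H^s$): there we cannot absorb $D^{s+3/4}_x v$ in $L^\infty_x L^2_T$ since Theorem \ref{MT1}, equations \eqref{mteq2}, only controls $D^s_x\partial_x v$, i.e.\ $s+1$ derivatives in that smoothing norm, so $s+3/4$ is fine — $D^{s+3/4}_x v = D_x^{-1/4} D_x^s \partial_x v$ is even better. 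The genuinely delicate term is $\mathrm{II}:=\int_0^t S(t-t')(uv)\,dt'$ with all derivatives on $u$: write $D^{s+3/4}_x(uv)\approx v\,D^{s+3/4}_x u + (\text{commutator}) + u\,D^{s+3/4}_x v$, and for the piece $v\,D^{s+3/4}_x u$ use the Kato smoothing \eqref{dual-kato-s-schr1}: $\|D^{1/2}_x\int_0^t S(t-t')F\|_{L^2_x}\le c\|F\|_{L^1_xL^2_T}$. So I would only need $D^{s+3/4}_x = D^{1/2}_x D^{s+1/4}_x$ applied to $uv$, reducing matters to bounding $\|D^{s+1/4}_x(uv)\|_{L^1_xL^2_T}$ and similarly $\|D^{s+1/4}_x(|u|^2u)\|_{L^1_xL^2_T}$.

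Next I would run the $L^1_xL^2_T$ estimates using the second fractional Leibnitz rule \eqref{LR-L1L2} to split $D^{s+1/4}_x(uv)$, then bound each resulting product by Hölder in $x$ and $t$: factors like $D^{s+1/4}_x u$ (equivalently $D^{-1/4}_x\cdot D^s_x\partial_x u$, controlled in $L^\infty_xL^2_T$ via \eqref{mteq2} after interpolating with $\|u\|_{L^\infty_T H^{s+1/2}}$), factors like $v$ or $u$ in $L^2_xL^\infty_T$ via \eqref{mteq3}, and $\partial_x u,\partial_x v$ in $L^4_TL^\infty_x$ via \eqref{mteq4}. This is exactly the same bookkeeping as in the proof of Theorem \ref{MT1} (estimates \eqref{lwp-1}–\eqref{lwp-2}), only with $s+1/2$ replaced by $s+3/4$ at the cost of a $1/2$-derivative Kato gain; the crucial point is that \emph{no factor needs more than $s+1$ derivatives in any smoothing norm}, so every term closes inside the function class \eqref{mteq1}–\eqref{mteq4}. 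For the weighted/low-derivative remainder pieces arising from the Schr\"odinger pointwise formula \eqref{lwp-12} (the $\Lambda_{t,\beta}$ term) I would instead bound the $H^{s+3/4}$ norm directly by placing $s+3/4\le s+1$ derivatives on the low-regularity factor and using \eqref{lwp-13} together with the weighted bounds $\sup_{[0,T]}\||x|^{r_1}u(t)\|_{L^2}<\infty$ from Theorem \ref{MT1} — this is where the hypothesis $L^2(|x|^{2s+1}dx)$ on $u_0$ (matching $r_1=s+1/2$) gets used.

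Finally, continuity in time of $t\mapsto\mathrm I(\cdot,t)\in H^{s+3/4}$ follows from the same estimates applied to differences $\mathrm I(t)-\mathrm I(t')$, exploiting strong continuity of the groups $S(t)$, $V(t)$ on $H^{s+3/4}$ and dominated convergence on the Duhamel integrals, exactly as the Lipschitz continuity of the data-to-solution map was obtained in Theorem \ref{MT1}. \textbf{The main obstacle} I anticipate is organizing the derivative count so that the coupling term $uv$ never forces more than $s+1$ derivatives onto $v$ in the Kato norm: one must be careful that the $1/2$-derivative supplied by \eqref{dual-kato-s-schr1} is spent on the $S(t-t')$ convolution and the remaining $s+1/4$ derivatives are split so that $v$ absorbs at most $s+1/4$ of them (harmless) while $u$, which is smoother by $1/2$, can safely take up to $s+3/4$; keeping this accounting consistent across the commutator terms of \eqref{LR-L1L2} is the part that requires genuine care rather than routine computation.
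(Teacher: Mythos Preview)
Your overall strategy---use the dual Kato smoothing \eqref{dual-kato-s-schr1} to trade half a derivative for an $L^1_xL^2_T$ norm, reducing to $\|D^{s+1/4}_x(uv)\|_{L^1_xL^2_T}$ and $\|D^{s+1/4}_x(|u|^2u)\|_{L^1_xL^2_T}$---is exactly what the paper does. But your account of where the weighted persistence enters is wrong, and this is a genuine gap, not a cosmetic one. You claim the $L^1_xL^2_T$ bookkeeping ``closes inside the function class \eqref{mteq1}--\eqref{mteq4}'' by the same H\"older splittings as in \eqref{lwp-1}--\eqref{lwp-2}. For the piece $\|u\,D^{s+1/4}_x v\|_{L^1_xL^2_T}$ this fails: $v$ lives only in $C([0,T];H^s)$, so $D^{s+1/4}_xv\notin L^2_TL^2_x$, while the maximal-function control on $u$ is only $L^2_x$ or $L^4_x$, never $L^1_x$. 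No unweighted H\"older splitting of $L^1_x$ using the norms \eqref{mteq1}--\eqref{mteq4} closes this term. The paper instead inserts a spatial weight via Cauchy--Schwarz in $x$:
\[
\|u\,D^{s+1/4}_x v\|_{L^1_xL^2_T}\le c\,\|\langle x\rangle^{1/2^+}u\,D^{s+1/4}_x v\|_{L^2_TL^2_x}
\le cT^{3/4}\|\langle x\rangle^{s+\frac12-\varepsilon}u\|_{L^\infty_TL^2_x}\|D^{s+1/4}_x v\|_{L^4_TL^\infty_x},
\]
the first factor finite by the weighted persistence of Theorem~\ref{MT1} (this is where $u_0\in L^2(|x|^{2s+1}dx)$ is actually spent), the second by the KdV Strichartz gain \eqref{str-kdv}. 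The commutator $E_1$ is handled analogously, using \eqref{LR-L1L2} together with the weighted Strichartz estimate \eqref{weighted-sch} and the interpolation Lemma~\ref{nahas-ponce}; again the decay of $u$ in $x$ is what buys the missing $L^1_x$ integrability.

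Your paragraph invoking the pointwise formula \eqref{lwp-12} and its $\Lambda_{t,\beta}$ remainder is a red herring here. That identity commutes $|x|^\beta$ through $e^{it\Delta}$ and is used in the \emph{proof of Theorem~\ref{MT1}} to establish the weighted persistence; it plays no role in Lemma~\ref{DBU_Schr}, where you are estimating an $H^{s+3/4}$ norm, not a weighted norm. The weighted persistence is consumed here as an \emph{input} (spatial decay of $u(t)$ used directly inside the $L^1_x$ estimate), not regenerated via \eqref{lwp-12}. Once you reroute the weights to the $L^1_x$ step as above, the rest of your plan---Leibnitz rule for the commutators, the cubic term via $\|u\|_{L^4_xL^\infty_T}$ from \eqref{sch-mfn-4}, and continuity in $t$---goes through as you describe.
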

In other words, the integral term $\mathrm{I}$ is smoother than the free propagator $e^{it\Delta}u_0$ by a quarter of derivative. In particular, this implies that for initial data as at the beginning of this section, the integral term $\mathrm{I}\in C([0,T]: C^{1,\frac{3}{4}^-}(\mathbb{R}))$.

\medskip

\begin{lem}\label{DBU_KdV}
Let $s>7/6$ and consider an initial data \[
(u_0,v_0)\in H^{s+\frac{1}{2}}(\mathbb{R})\cap L^2(\vert x\vert^{2s+1}dx)\times H^s(\mathbb{R})\cap L^2(\vert x\vert^{s}dx).
\]
Let $(u(t),v(t))$ be the corresponding solution for the IVP \eqref{IVP} given by Theorem \ref{MT1},
\begin{align*}
v(x,t)&=V(t)v_0+\int_0^t V(t-t')\partial_x(v^2)(t')dt'+\int_0^tV(t-t')(\partial_x\vert u\vert^2)(t')dt'
\\ & =:V(t)v_0+\mathrm{II}(x,t),
\end{align*}
then $\mathrm{II}\in C([0,T];H^{s+\frac{1}{6}}(\mathbb{R}))$.
\end{lem}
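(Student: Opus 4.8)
The plan is to show that the Duhamel term $\mathrm{II}(x,t)=\int_0^t V(t-t')\partial_x(v^2)(t')\,dt'+\int_0^t V(t-t')\partial_x(|u|^2)(t')\,dt'$ gains $1/6$ derivative over the regularity $H^s$ of the data. The gain comes from combining the smoothing effect of the Airy group with the regularity information already encoded in the solution spaces $\mu_1^T(u),\mu_2^T(v)$ from Theorem \ref{MT1}. Concretely, I would estimate $\|D_x^{s+1/6}\mathrm{II}(t)\|_{L^2}$ by splitting the $s+1/6$ derivatives: put $1/6$ of them onto the Airy propagator using a Kato-type smoothing bound of the form $\|D_x^{1/6}\,\partial_x\int_0^t V(t-t')F(t')\,dt'\|_{L^2_x}\lesssim \|F\|_{L^{p}_xL^{q}_T}$ (or its dual Strichartz/smoothing variant), and leave $s$ derivatives acting on the nonlinearity. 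The remaining factor $\partial_x D_x^{s}$ on $v^2$ or $|u|^2$ is then handled by the fractional Leibniz rule \eqref{commutator1} exactly as in the estimates \eqref{lwp-4}–\eqref{lwp-5}, distributing one full derivative and $D_x^s$ across the product and absorbing the worst term via $\|v\|_{L^2_xL^\infty_T}\|D_x^s\partial_x v\|_{L^\infty_xL^2_T}$ and the analogous $u$-term, both of which are finite by \eqref{mteq2}–\eqref{mteq3}.

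The steps, in order: (i) recall from Theorem \ref{MT1} that the solution lies in the space defined by \eqref{mteq1}–\eqref{mteq4}, so in particular $\mu_1^T(u)+\mu_2^T(v)<\infty$ for $s>3/4$; the stronger hypothesis $s>7/6$ guarantees $s-1/6>1$ so that Sobolev embedding still controls the $L^\infty_x$ norms of $\partial_x u,\partial_x v$ that appear; (ii) write $D_x^{s+1/6}\mathrm{II}=\int_0^t V(t-t')D_x^{s-5/6}\partial_x^2(\cdots)$ — no, better: keep $\partial_x$ with the group and write the integrand's spatial operator as $D_x^{1/6}\partial_x \cdot D_x^{s}$; (iii) apply the smoothing/Strichartz inequality for $V(t)$ that trades the $D_x^{1/6}\partial_x$ in front of the Duhamel integral for an $L^1_xL^2_T$ (or $L^{4/3}_TL^1_x$-type) norm of $D_x^s(v^2)$ and $D_x^s(|u|^2)$; (iv) expand $D_x^s(v^2)$ by Leibniz, bounding $\|D_x^s(v\,v)\|_{L^1_xL^2_T}\lesssim \|v\|_{L^2_xL^\infty_T}\|D_x^s v\|_{L^2_xL^2_T}$ type terms, and likewise for $|u|^2=u\bar u$; (v) collect everything into $\mathrm{II}\in L^\infty([0,T];H^{s+1/6})$, and upgrade to $C([0,T];H^{s+1/6})$ by the usual continuity-in-time argument (dominated convergence on the Duhamel integral plus density), exactly as for the local theory.

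The main obstacle will be pinning down precisely which smoothing estimate for the Airy group delivers exactly a $1/6$-derivative gain while keeping the right-hand side controllable by the norms $\mu_1^T,\mu_2^T$ that Theorem \ref{MT1} provides. The natural candidates are the dual Kato estimate \eqref{dual-kato-s-kdv} (which gives a full derivative but only in $L^1_xL^2_T \to L^2_x$, too strong to feed the nonlinearity here since one would need $D_x^{s+1/6}(\text{nonlinearity})$ in $L^1_xL^2_T$, losing the gain) versus the Strichartz-type bound \eqref{str-kdv} or the interpolated estimate \eqref{inter-2}, which trade derivative gain for integrability. I expect the correct route is to interpolate between the sharp Kato smoothing (gain of one derivative in $L^2_x$) and a Strichartz estimate (no derivative gain but $L^q_tL^p_x$ control), choosing the interpolation parameter so the net gain is $1/6$ and the dual exponents on the nonlinear side are $L^{r}_TL^1_x$ with $r$ large enough that Hölder in time is affordable on $[0,T]$. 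The constraint $s>7/6$ presumably arises exactly from requiring that, after this interpolation, the residual derivatives on the product can still be distributed by \eqref{commutator1} with one factor landing in $L^\infty_x$ via Sobolev embedding $H^{s-1/6}\hookrightarrow W^{1,\infty}$, i.e. $s-1/6>1$. Once the correct estimate is identified, the nonlinear bookkeeping is routine and mirrors \eqref{lwp-3}–\eqref{lwp-7}.
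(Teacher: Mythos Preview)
Your overall architecture (dual Kato smoothing plus fractional Leibniz) is the same as the paper's, and for the $|u|^2$ piece it works essentially as you say: since $u\in H^{s+1/2}$, after the dual estimate \eqref{dual-kato-s-kdv} one has to bound $\|D_x^{s+1/6}(|u|^2)\|_{L^1_xL^2_T}$, and Leibniz plus $\|u\|_{L^2_xL^\infty_T}\|D_x^{s+1/6}u\|_{L^2_{T,x}}$ closes with room to spare. The gap is in the $v^2$ piece and in your account of where the $1/6$ actually comes from.

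You propose to obtain the $1/6$ gain by interpolating the linear inhomogeneous estimate itself, so as to land on $\|D_x^{s}(v^2)\|_{L^1_xL^2_T}$, and then close with $\|v\|_{L^2_xL^\infty_T}\|D_x^{s}v\|_{L^2_{T,x}}$. That route does not work: the sharp Kato smoothing for the Airy group gains exactly one derivative, and there is no inhomogeneous Airy estimate that trades $D_x^{1/6}\partial_x$ on the Duhamel integral for an $L^1_xL^2_T$ norm of the forcing. In the paper the $1/6$ gain is produced \emph{after} the dual Kato step and \emph{after} Leibniz, by the Strichartz estimate \eqref{str-kdv} with $(p,q)=(6,6)$, $\theta=2/3$, $\alpha=1/2$, which controls $\|D_x^{s+1/6}v\|_{L^6_xL^6_T}$ from $\|v\|_{L^\infty_T H^s}$. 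Concretely, \eqref{dual-kato-s-kdv} reduces the problem to $\|D_x^{s-5/6}(v\partial_x v)\|_{L^1_xL^2_T}$; Leibniz and H\"older split this as $\|v\|_{L^{6/5}_xL^3_T}\,\|D_x^{s+1/6}v\|_{L^6_xL^6_T}$ plus lower-order commutator errors.

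Here is the point you are missing entirely: the factor $\|v\|_{L^{6/5}_xL^3_T}$ is \emph{not} controlled by $\mu_2^T(v)$; the exponent $6/5<2$ forces the use of the weighted persistence from Theorem~\ref{MT1}. One writes $\|v\|_{L^{6/5}_x}\le c\|\langle x\rangle^{1/2^+}v\|_{L^3_x}$ via H\"older against $\langle x\rangle^{-1/2^-}$, then uses Sobolev and the interpolation inequality \eqref{nahas-ponce-ineq} between $\|J^{s}v\|_{L^2}$ and $\|\langle x\rangle^{s/2}v\|_{L^2}$. This interpolation is exactly where the threshold $s>7/6$ appears: one needs $\gamma>1/s$ (to reach the weight $\langle x\rangle^{1/2^+}$ from $\langle x\rangle^{s/2}$) together with $(1-\gamma)s>1/6$ (to leave the required $J^{1/6}$), which forces $s-1>1/6$. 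Your proposed explanation via $H^{s-1/6}\hookrightarrow W^{1,\infty}$ would instead demand $s>5/3$ and is not the mechanism at play. In short, without invoking the weighted norms on $v$ your argument for the $v^2$ term cannot close; once you bring them in as above, the proof follows the paper's line.
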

The lemma affirms that the integral term $\mathrm{II}$ is smoother than the free propagator $V(t)v_0$ by a sixth derivative. In particular, this implies that for initial data $v_0$ as above, the integral term $\mathrm{II}\in C([0,T]: C^1(\mathbb{R}))$.

\medskip

\begin{proof}[Proof of Lemma \ref{DBU_Schr}]
First of all, recall that the local well-posedness Theorem \ref{MT1} guarantees the existence of the solution \[ 
u\in C([0,T]:H^{s+\frac{1}{2}}(\mathbb{R})\cap L^2(\vert x\vert^{2s+1}dx)), \quad v\in C([0,T]:H^s(\mathbb{R})\cap L^2(\vert x\vert^{s}dx)).
\] 
Now, let us divide the analysis in two steps. First, define
\[
u_1(t):=\int_0^t S(t-t')(uv)(t')dt'.
\]
We shall show that $u_1(t)\in H^{s+\frac{3}{4}}(\mathbb{R})$ for all $t\in[0,T]$. In fact, by \eqref{dual-kato-s-schr1} we have 
\begin{equation}\label{schr_est1}
\begin{split}
\Vert D^{s+\frac{3}{4}}_xu_1\Vert_{L^2_x}& \leq \big\Vert D^{1/2}_x \int_0^tS(t-t')D^{s+\frac{1}{4}}_x(uv)(t')dt'\big\Vert_{L^2} \\
&\leq \Vert D^{s+\frac{1}{4}}_x(uv)\Vert_{L^1_xL^2_T}\\
& \leq \Vert vD^{s+\frac{1}{4}}_xu\Vert_{L^1_xL^2_T}+\Vert uD^{s+\frac{1}{4}}_xv\Vert_{L^1_xL^2_T}+E_1, 
\end{split}
\end{equation}
Let us estimate each of these terms. Using H\"older's inequality we can bound the first term of \eqref{schr_est1} by \begin{align*}
\Vert vD^{s+\frac{1}{4}}_xu\Vert_{L^1_xL^2_T}&\leq c\Vert v \Vert_{L^2_xL^\infty_T}\Vert D^{s+\frac{1}{4}}_x u\Vert_{L^2_xL^2_T}
\\ & \leq cT^{1/2}\Vert v\Vert_{L^2_xL^\infty_T}\Vert D^{s+\frac{1}{4}}_xu\Vert_{L^\infty_TL^2_x}
\\ & <\infty.
\end{align*}
On the other hand, the second term of \eqref{schr_est1} can be bound by 
\begin{align*}
\Vert uD^{s+\frac{1}{4}}_xv\Vert_{L^1_xL^2_T}&\leq c\Vert \langle x\rangle^{1/2}uD^{s+\frac{1}{4}}_xv\Vert_{L^2_TL^2_x}
\\ & \leq cT^{3/4}\Vert \langle x\rangle^{s+\frac12-\varepsilon}u\Vert_{L^\infty_TL^2_x}\Vert D^{s+\frac{1}{4}}_xv\Vert_{L^4_TL^\infty_x}
\\ & <\infty.
\end{align*}
Now to estimate $E_1$ we shall employ commutator estimates and interpolated norms of the previous terms. For the sake of completeness
we sketch the proof. 
\begin{equation*}
\begin{split}
E_1&=\|D^a_x(u\p_x v)-uD^a_x\p_x v-\p_xvD^a_x u\|_{L^1_xL^2_T}\\
&\hskip10pt+\|D^a_x(v\p_x u)-vD^a_x\p_x u-\p_xuD^a_x v\|_{L^1_xL^2_T}\\
&\hskip10pt+\|\p_xuD^a_x v\|_{L^1_xL^2_T}+\|\p_xvD^a_x u\|_{L^1_xL^2_T}
\end{split}
\end{equation*} 
where $a\in (0,1)$ is such that $s+\frac14=1+a$.

Applying the Leibnitz rule \eqref{LR-L1L2} and H\"older's inequality it follows that
\begin{equation*}
\begin{split}
E_1 & \le  c\, \|\langle x\rangle^{1/4^+}\p_xu\|_{L^{4}_TL^{4}_x}\|\langle x\rangle^{1/4}D^a_x v\|_{L^4_TL^{4}_x} +c\, \|\p_x v\|_{L^{20/3}_xL^5_T}\|D^a_xu\|_{L^{20/17}_xL^{10/3}_T}\\
&= E_{1,1}+E_{2,2}.
\end{split}
\end{equation*}

Let us first estimate the term $E_{1,1}$. To estimate  $\|\langle x\rangle ^{1/4^+}\p_xu\|_{L^{4}_TL^{4}_x}$ we will use Strichartz estimate \eqref{weighted-sch} combined with the weighted estimate \eqref{lwp-14}. Indeed, H\"older inequality and interpolation inequalities in lemma \ref{nahas-ponce} give us 
\begin{align*}
\Vert \langle x\rangle^{1/4^+}\partial_xu\Vert_{L^4_TL^4_x}&\leq cT^{1/8}\Vert \langle x\rangle^{1/4^+}\partial_xu\Vert_{L^8_TL^4_x}
\\ & \leq cT^{1/8}\Vert \langle x\rangle^{1/4^+}\partial_x u\Vert_{L^\infty_TL^2_x}+cT^{1/8}(1+T)\Vert u\Vert_{L^\infty_TH^{s+1/2}_x}
\\ & \leq cT^{1/8}\Vert u\Vert_{L^\infty_TH^{s+1/2}_x}^{2/(2s+1)} \Vert \langle x\rangle^{\frac{2s+1}{4(2s-1)}^+} u\Vert_{L^\infty_TL^2_x}^{(2s-1)/(2s+1)}+
\\ & \quad + cT^{1/8}(1+T)\Vert u\Vert_{L^\infty_TH^{s+1/2}_x},
\end{align*}
which is finite thanks to the fact $\tfrac{2s+1}{4(2s-1)}<s+\tfrac{1}{2}$.

Let us now estimate $\|\langle x\rangle ^{1/4}D^a_xv\|_{L^{4}_TL^{4}_x}$. Using Sobolev's embedding and the interpolation inequalities in Lemma \ref{nahas-ponce} we obtain
\begin{align*}
\Vert \langle x\rangle^{1/4}D^a_xv\Vert_{L^4_TL^4_x}&\leq cT^{1/4}\Vert J^{1/4}\langle x\rangle^{1/4}D^a_xv\Vert_{L^\infty_TL^2_x}
\\ &\leq cT^{1/4}\Vert v\Vert_{L^\infty_TH^s_x}^{1/3}\Vert \langle x\rangle^{3/8}D^a_xv\Vert_{L^\infty_TL^2_x}^{2/3}
\\ & \leq cT^{1/4}\Vert v\Vert_{L^\infty_TH^{s}_x}^{(2s-1)/2s}\Vert \langle x\rangle^{s/2}v\Vert_{L^\infty_TL^2_x}^{1/2s}
\end{align*}
which is finite.

To estimate  $\|\p_x v\|_{L^{20/3}_xL^5_T}$  we employ the linear estimate \eqref{inter-2} and a similar argument to show that
the solution $v$ is in $H^s$, $s>3/4$. To bound $\|D^a_xu\|_{L^{20/17}_xL^{10/3}_T}$, using the same ideas as in the previous estimation we obtain
\begin{align*}
\Vert D^a_xu\Vert_{L^{20/17}_xL^{10/3}_T}&\leq c\Vert \langle x\rangle^{11/20^+}D^a_xu\Vert_{L^{10/3}_TL^{10/3}_x}
\\ & \leq  cT^{1/5}\Vert \langle x\rangle^{11/20^+}D^a_xu\Vert_{L^{10}_TL^{10/3}_x}
\\ & \leq cT^{1/5}\Vert \langle x\rangle^{11/20^+}D^a_xu\Vert_{L^{\infty}_TL^{2}_x}+cT^{1/5}(1+T)\Vert u\Vert_{L^\infty_T H ^{s+1/2}_x}
\\ & \leq cT^{1/5}\Vert u\Vert_{L^\infty_T H^{s+1/2}_x}^{(4s-3)/(4s+2)}\Vert \langle x\rangle^{11(2s+1)/50^+}u\Vert_{L^\infty_TL^2_x}^{5/(4s+2)}+
\\ &\quad +cT^{1/5}(1+T)\Vert u\Vert_{L^\infty_TH^{s+1/2}_x}
\end{align*}
which is finite thanks to the fact $\tfrac{11(2s+1)}{50}<s+\tfrac{1}{2}$ and
therefore we have \[
u_1(t)\in H^{s+\frac{3}{4}}(\mathbb{R}) \quad\hbox{for all } \,t\in[0,T].
\]
Now, let us consider the second integral term of the solution $u(x,t)$: \[
u_2(t):=\int_0^tS(t-t')(\vert u\vert^2u)(t')dt'.
\]
We shall show that $u_2(t)\in H^{s+\frac{3}{4}}(\mathbb{R})$ for all $t\in[0,T]$. In fact, by the dual version of Kato's smoothing effect \eqref{dual-kato-s-kdv} we have
\begin{equation}
\begin{split}
\Vert D^{s+\frac{3}{4}}_xu_2\Vert_{L^2_x}&\leq c \Vert D^{s+\frac{1}{4}}_x(\vert u\vert^2u)\Vert_{L^1_xL^2_T}
\\ & \leq c\Vert u \Vert_{L^4_xL^\infty_T}^2\Vert D^{s+\frac{1}{4}}_xu\Vert_{L^\infty_xL^2_T} +E_2,
\\ & <\infty.
\end{split}
\end{equation}
where, again, the terms in $E_2$ are easy to control by considering the commutator estimates (see \cite{KPV}) and the interpolated norms of the previous terms, so we omit the details. Above we have used the estimate \eqref{sch-mfn-4} applied to \eqref{u_sol} and then the arguments
in \eqref{lwp-1} and \eqref{lwp-2} to bound $\Vert u \Vert_{L^4_xL^\infty_T}$. Thus we conclude that $\mathrm{I}\in C([0,T];H^{s+\frac{3}{4}}(\R))$.

%To apply the Leibiniz rule \eqref{commutator} we observe first that $s+\frac14= 1+a$, for $a\in(0,1)$ since $s=\frac34^{+}$. Thus
%\begin{equation}
%D^{s+\frac14}_x(uv)= D^a_xH\p_x(uv)=HD^a_x(u\p_xv)+HD^a_x(v\p_xu)
%\end{equation}
%where $H$ denotes the Hilbert transform. 
%Since the estimate \eqref{} also holds if we replace $D^{\alpha}_x$ by $\tilde{D}^{\alpha}_x=HD^{\alpha}_x$ to simplify we
%still use the notation $D^{\alpha}_x$.
\end{proof}

\begin{proof}[Proof of Lemma \ref{DBU_KdV}]
First of all, recall that the local well-posedness Theorem \ref{MT1} guarantees the existence of the solution \[ 
u\in C([0,T]:H^{s+\frac{1}{2}}(\mathbb{R})\cap L^2(\vert x\vert^{2s+1}dx)), \quad v\in C([0,T]:H^s(\mathbb{R})\cap L^2(\vert x\vert^{s}dx)).
\] 
Now, let us divide the analysis in two steps. First, define \[
v_1(t):=\int_0^t V(t-t')\,\partial_x(v^2)(t')dt'\,\in\,C^1(\mathbb{R}).
\]
We shall show that $v_1(t)\in H^{s+\frac{1}{6}}(\mathbb{R})$ for all $t\in[0,T]$.
In fact, using the smoothing Kato effect \eqref{dual-kato-s-kdv}, we obtain \begin{align*}
\sup_{0\leq t\leq T}\Vert D^{s+1/6}_x\int_0^tV(t-t')(v\partial_xv)(t')dt'\Vert_2&\leq \Vert D^{s-5/6}_x(v\partial_x v)\Vert_{L_x^1L_T^2}
\\ & \leq \Vert v\Vert_{L_x^{6/5}L_T^3}\Vert D^{s+1/6}_xv\Vert_{L^6_xL^6_T}+E_1,
\end{align*}
where $E_1$ are easy to control by considering the commutator estimates (see \cite{KPV}) and interpolated norms of the previous terms to be considered below, so we omit this proof. Now, from Strichartz estimates \eqref{str-kdv} with $p=q=6$, $\theta=\tfrac{2}{3}$ and $\alpha=\tfrac{1}{2}$ we obtain: \[
\Vert D^{s+1/6}_xv\Vert_{L_x^6L_T^6}<\infty.
\]
On the other hand, using \eqref{nahas-ponce-ineq} in Lemma \ref{nahas-ponce} we deduce: \begin{align*}
\Vert v\Vert_{L^{6/5}_xL^3_T}&\leq c\Vert \langle x\rangle^{1/2^+}v\Vert_{L^3_TL^3_x}
\\ & \leq cT^{1/3}\Vert \langle x\rangle^{1/2^+}v\Vert_{L_T^\infty L_x^3}
\\ & \leq cT^{1/3}\Vert J^{1/6}\big(\langle x\rangle^{1/2^+}v\big)\Vert_{L_T^\infty L_x^2} 
\\ & \leq cT^{1/3}\Vert J^{s}v\Vert_{L^\infty_TL^2_x}^{1-\gamma}\Vert \langle x\rangle ^{\frac{s}{2}^-}v\Vert_{L^\infty_TL^2_x}^\gamma,
\end{align*}
with $\gamma$ such that $\tfrac{s\gamma}{2}^-=\tfrac{1}{2}^+$, i.e. $\gamma> \tfrac{1}{s}$, and such that $(1-\gamma)s>1/6$. Note that the last inequality imposes the restriction $s>\tfrac{7}{6}$. Thus we have $v_1(t)\in H^{s+\frac{1}{6}}(\mathbb{R})$ for all $t\in[0,T]$, which concludes the demonstration of the first step.

\medskip

Now, let us consider the second integral term of the solution $v(t,x)$:
\[
v_2(t):=\int_0^t V(t-t')\big(\partial_x\vert u \vert^2\big)(t')dt'\,\in\,C^1(\mathbb{R})
\]
We shall show that $v_2(t)\in H^{s+\frac{1}{6}}(\mathbb{R})$ for all $t\in[0,T]$. For this, we use the inhomogeneous smoothing Kato effect \eqref{dual-kato-s-kdv}, thus we obtain: 
\begin{align*}\sup_{0\leq t \leq T}\big\Vert D^{s+\frac{1}{6}}_x\int_0^tV(t-t')\big(\partial_x\vert u \vert^2 \big)dt'\big\Vert_2&\leq c\big\Vert D^{s+\frac{1}{6}}_x\big(\vert u\vert ^2 \big)\big\Vert_{L^1_xL^2_T}
\\ & \leq c\Vert u\Vert_{L^2_xL^\infty_T}\Vert D^{s+\frac{1}{6}}_xu\Vert_{L^2_TL^2_x}+E_2
\\ & \leq c T^{1/2}\Vert u\Vert_{L^2_xL^\infty_T}\Vert D^{s+\frac{1}{6}}_xu\Vert_{L^\infty_TL^2_x}+E_2,
\end{align*}
where $\Vert u\Vert_{L^2_xL^\infty_T}<\infty$ due to Theorem \ref{MT1} and the terms in $E_2$ are easy to control by considering the commutator estimates and the interpolated norms of the previous terms.

This concludes the estimates for the solution $v$. 

\medskip

Therefore we have shown that the Duhamel terms associated to our solutions are smoother that the corresponding linear associated solutions.
In consequence, if there is a point singularity it has to be provided by the linear solution.

%Thus, we concluded that we can reduced ourselves to consider the linear associated problem so the nonlinearity after Section \ref{section_ID} is not relevant for our purposes.

\end{proof}

% ============= BIBLIOGRAFIA ==============

%%%%%%%%%%%%%%%%%%%%%%
%%%%%%%%%%%%%%%%%%%%%%
%%%%%%%%%%%%%%%%%%%%%%
%%%%%%%%%%%%%%%%%%%%%%

\medskip

% ============= FIN DE DOCUMENTO ==============

\begin{thebibliography}{99}

\medskip


\bibitem{BeOgPo} Bekiranov, Daniella; Ogawa, Takayoshi; Ponce, Gustavo, \emph{Interaction equations for short and long dispersive waves}. J. Funct. Anal. 158 (1998), no. 2, 357–388.

\bibitem{BeOgPo2} Bekiranov, Daniella; Ogawa, Takayoshi; Ponce, Gustavo, \emph{Weak solvability and well-posedness of a coupled Schrödinger-Korteweg de Vries equation for capillary-gravity wave interactions}. Proc. Amer. Math. Soc. 125 (1997), no. 10, 2907–2919. 

\bibitem{BeBu} E. S. Benilov and S. P. Burtsev, \emph{To the integrability of the equations describing the Langmuir-wave-ion-acoustic-wave interaction}. Phys. Let., 98A (1983), 256–258. MR0720816 (85f:76120) 

\bibitem{BeBoMa} T. B. Benjamin, J. L. Bona, J. J. Mahony, \emph{Model equations for long waves in nonlinear, dispersive media}, Philos. Trans. R. Soc. Lond. A272(1972)47–78.

\bibitem{BoSa} Bona, J. L.; Saut, J.-C., \emph{Dispersive blowup of solutions of generalized Korteweg-de Vries equations}. J. Differential Equations 103 (1993), no. 1, 3–57.

\bibitem{BoPoSaSp} Bona, J. L.; Ponce, G.; Saut, J.-C.; Sparber, C., \emph{Dispersive blow-up for nonlinear Schr\"odinger equations revisited}. 
J. Math. Pures Appl. (9) 102 (2014), no. 4, 782–811. 

\bibitem{BoSa2} Bona, Jerry L.; Saut, Jean-Claude, \emph{Dispersive blow-up II. Schrödinger-type equations, optical and oceanic rogue waves}. Chin. Ann. Math. Ser. B 31 (2010), no. 6, 793–818. 

\bibitem{Bo} J. Bourgain, \emph{Fourier transform restriction phenomena for certain lattice subsets and applications to nonlinear evolution equations}, Geometric and Functional Anal., 3 (1993), 107–156, 209–262. MR1209299 (95d:35160a); MR1215780 (95d:35160b)


\bibitem{CoLi} Corcho, A. J.; Linares, F., \emph{Well-posedness for the Schr\"odinger-Korteweg-de Vries system}, Trans. Amer. Math. Soc. 359 (2007), no. 9, 4089–4106. 

\bibitem{FoLiPo} Fonseca, G., Linares, F., Ponce, G.: \emph{On persistence properties in fractional weighted spaces}. Proc. Am. Math. Soc. 143, 5353–5367 (2015)

\bibitem{FuOi} M. Funakoshi and M. Oikawa, \emph{The resonant interaction between a long internal gravity wave and a surface gravity wave packet}, J. Phys. Soc. Japan, 52 (1983), 1982–1995. MR0710730
(84k:76030)

\bibitem{ZiYu} Zihua, Guo; Wang, Yuzhao, \emph{On the well-posedness of the Schrödinger-Korteweg-de Vries system}, J. Differential Equations 249 (2010), no. 10, 2500–2520.

\bibitem{HoIkMiNi} H. Hojo, H. Ikezi, K. Mima and K. Nishikawa, \emph{Coupled nonlinear electron-plasma and ionacoustic waves}, Phys. Rev. Lett., 33 (1974), 148–151.

\bibitem{KaKaSu} T. Kakutani, T. Kawahara and N. Sugimoto, \emph{Nonlinear interaction between short and long capillary-gravity waves}, J. Phys. Soc. Japan, 39 (1975), 1379–1386.

\bibitem{K} Kato, T., \emph{On the Cauchy problem for the (generalized) Korteweg-de Vries equation}. Studies in applied mathematics, 93–128, 
Adv. Math. Suppl. Stud., 8, Academic Press, New York, 1983.

\bibitem{KPV} Kenig, C.; Ponce, G.; Vega, L., \emph{Well-posedness and scattering results for the generalized Korteweg-de Vries equation via the contraction principle}, Comm. Pure Appl. Math. 46 (1993), no. 4, 527–620.

\bibitem{KPV2} Kenig, C.; Ponce, G.; Vega, L., \emph{A bilinear estimate with applications to the KdV equation}, J. Amer. Math. Soc., 9 (1996), 573–603. MR1329387 (96k:35159) .

\bibitem{KPV3} C. E. Kenig, G. Ponce and L. Vega, \emph{
The Cauchy problem for the Korteweg-de Vries equation
in Sobolev spaces of negative indices}, Duke Math. J., 71 (1993), 1–21. MR1230283
(94g:35196)

\bibitem{LiPo} F. Linares and G. Ponce, \emph{Introduction to Nonlinear Dispersive Equations}. Universitext. Springer, New York, 2009


\bibitem{LiPoSm} Linares, F.; Ponce, G.; Smith, D., \emph{On the regularity of solutions to a class of nonlinear dispersive equations}, 
Math. Ann. 369 (2017), no. 1-2, 797–837. 

\bibitem{LiSc} Linares, F.; Scialom, M., \emph{On the smoothing properties of solutions to the modified Korteweg-de Vries equation}, J. Differential Equations 106 (1993), no. 1, 141–154.
 
 \bibitem{Li}  Linares, F. {\em A higher order modified Korteweg-de Vries equation}, Mat. Apl. Comput. 14 (1995), no. 3, 253--267.

\bibitem{NP} Nahas J. and Ponce G., \emph{On the persistent properties of solutions to semi-linear Schr\"odinger equation}, Comm. Partial Differential Equations 34 (2009), no. 10-12, 1208–1227.

\bibitem{RoVi} K. M. Rogers, P. Villarroya, \emph{Global estimates for the Schr\"odinger maximal operator}, Ann. Acad. Sci. Fenn. 32 (2007) 425–435. 

\bibitem{SaYa} J. Satsuma and N. Yajima, \emph{Soliton solutions in a diatomic lattice system}, Progr. Theor.
Phys., 62 (1979), 370–378.


\bibitem{Ts} Y. Tsutsumi, \emph{$L^2$-solutions for nonlinear Schr\"odinger equations and nonlinear groups}, Ekvacioj, 30 (1987), 115–125. MR0915266 (89c:35143)

\bibitem{Ve2} Vega, L., \emph{El multiplicador de Schr\"odinger, la funcion maximal y los operadores de restriccrci\'on}, Doctoral Thesis, Universidad Autonoma de Madrid, 1988.

\bibitem{Ve} L. Vega, \emph{Schr\"odinger equations: pointwise convergence to the initial data}, Proc. Am. Math. Soc. 102 (1988) 874–878.


\end{thebibliography}
\end{document}